\documentclass[oupthm]{CUP-JNL-BCM}%

\usepackage{graphicx}
\usepackage{multicol,multirow}
\usepackage{amsmath,amssymb,amsfonts}
\usepackage{mathrsfs}
\usepackage{rotating}
\usepackage{appendix}
\usepackage[numbers]{natbib}

\theoremstyle{oupplain}
\newtheorem{theorem}{Theorem}[section]
\newtheorem{lemma}[theorem]{Lemma}
\theoremstyle{oupdefinition}
\newtheorem{definition}{Definition}[section]
\theoremstyle{oupremark}
\newtheorem{remark}[theorem]{Remark}
\newtheorem{example}[theorem]{Example}
\theoremstyle{oupproof}
\newtheorem{proof}{Proof}

\numberwithin{equation}{section}

\articletype{RESEARCH ARTICLE}
\jyear{2025}

\begin{document}

\begin{Frontmatter}

\title[Spectral Bounds for Tensors Derived from Trace Functionals and Wasserstein Distance in Tensor Spaces] {Spectral Bounds for Tensors Derived from Trace Functionals and Wasserstein Distance in Tensor Spaces}

\author{Hemant Sharma$^1$}
\author{Nachiketa Mishra$^2$\thanks{Correspondence to : nmishra@iiitdm.ac.in}}

\authormark{H. Sharma and N. Mishra}

\address{\orgname{$^1$Department of Mathematics
        Indian Institute of Information Technology, Design and Manufacturing Kancheepuram}, \orgaddress{\city{Chennai}, \country{ India}}
\email{sharmahemant39@gmail.com}}
\address{\orgname{$^2$Department of Mathematics
        Indian Institute of Information Technology, Design and Manufacturing Kancheepuram}, \orgaddress{\city{Chennai}, \country{ India}}
\email{nmishra@iiitdm.ac.in}}

\keywords[AMS subject classification]{15A69, 15A15, 15A18}

\keywords{Tensor, T- Product, Trace, Eigenvalue bounds, Bures-Wasserstein distance.}

\abstract{This article introduces a trace-based metric on the space of positive semi-definite (PSD) tensors, offering a geometric perspective that connects their algebraic structure to their intrinsic geometric properties. It defines the Bures-Wasserstein distance on tensor spaces, establishing clear measurements between tensors. Moreover, the study derives trace-based eigenvalue bounds for PSD tensors and analyzes how these bounds depend on the PSD condition. The behavior of these bounds is further explored when the PSD requirement is relaxed, with illustrative examples provided to support the theoretical findings. In addition, a detailed complexity analysis is carried out for the methods proposed in this study.}

\end{Frontmatter}

\section{Introduction}
\emph{Motivation:~}Tensors, as natural extensions of matrices, have gained significant importance in various fields, including machine learning \cite{machinelearn}, signal processing \cite{signalprocess}, quantum computing \cite{quantam}, and numerical optimization \cite{numerical}. In particular, the study of positive semi-definite (PSD) tensors is crucial for generalizing spectral properties and geometric structures that are well understood in matrix analysis. The eigenvalue bounds for PSD matrices have numerous applications in stability analysis, optimization, and graph theory. However, extending these results to higher-order tensors presents new mathematical and computational challenges.\\[-1ex]

\noindent One of the most structured and computationally efficient ways to extend matrix operations to tensors is through the T-product \cite{kathrinLund} structure, which provides an approach for defining multiplication, decomposition, and spectral properties of tensors. The T-product, utilizing the discrete Fourier transform (DFT), facilitates tensor computations while retaining essential matrix properties, making it a valuable approach in tensor analysis.\\[-1ex]

\noindent This article focuses on establishing trace-based eigenvalue bounds for PSD tensors under the T-product. Furthermore, we extend fundamental matrix distance metrics, including the Bures-Wasserstein distance \cite{bures}, to tensor spaces using the T-product structure. A new trace-based distance metric is introduced to provide a geometric structure for PSD tensors, bridging algebraic and geometric properties.\\[-1ex]

\noindent \emph{Related work and problem:~}\\The study of PSD matrices has been well developed in classical linear algebra, with well-known results on eigenvalue bounds, trace-based inequalities, and spectral distance metrics. In matrix analysis, the Bures-Wasserstein distances serve as key measures for comparing PSD matrices and have widespread applications in quantum information \cite{quantaminform} and optimal transport \cite{optimal}. Extending these tools to higher-order tensors presents a challenge due to the absence of a well-defined framework for matrix-like operations in tensor spaces\\[-1ex]

\noindent Several approaches have been proposed to generalize matrix operations to tensors, including mode-wise products \cite{tensormmode}, Tucker decomposition, and tensor norms \cite{numerical}. However, these frameworks often fail to preserve fundamental algebraic properties such as associativity and invertibility, which are essential in spectral analysis and optimization.\\[-1ex]

\noindent One of the primary challenges in extending trace-based eigenvalue bounds and distance metrics to PSD tensors lies in establishing a well-defined notion of trace and eigenvalues within the T-product framework. Since the trace of a tensor is not as straightforward as in matrices, constructing trace-based metrics that retain useful geometric interpretations requires a careful formulation. This article addresses these challenges by:
\begin{itemize}
    \item Defining trace-based eigenvalue bounds for PSD tensors under the T-product.
    \item Extending classical distance metrics (Bures-Wasserstein) to tensor spaces using T-product operations.
    \item Introducing a trace-based distance metric that provides a geometric structure for PSD tensors.
\end{itemize}

\noindent These contributions provide a more structured understanding of PSD tensors and their applications in optimization, signal processing, and machine learning.\\[-1ex]

\noindent \emph{Contributions:~}\\This article makes several key contributions to the study of positive semi-definite (PSD) tensors under the T-product framework. First, it establishes trace-based eigenvalue bounds for PSD tensors and examines their dependence on the PSD condition, analyzing their behavior when this assumption is relaxed. Theoretical findings are further supported by illustrative examples. Second, it extends the Bures-Wasserstein distance widely used in matrix analysis to tensor spaces using the T-product formulation, providing a structured approach to comparing PSD tensors while preserving essential spectral properties.\\[-1ex]

\noindent Additionally, a trace-based distance metric is introduced, designed using trace operations within the T-product structure, which naturally connects the algebraic structure of tensors to their geometric interpretation. Lastly, numerical examples are presented to demonstrate the performance of the proposed eigenvalue bounds and distance metric, providing theoretical confirmation and practical insights into their applicability.\\

\noindent The structure of the article is as follows: Section \ref{def} establishes the foundation by introducing essential notation, definitions, and preliminary results that serve as the basis for subsequent discussions. Section \ref{results} explores trace bounds, presenting several eigenvalue inequalities for PSD tensors and their products, and analyzing the impact of the PSD condition on these bounds. Additionally, examples are provided to illustrate the theorems. Section \ref{distance} introduces the concept of tensor distance, formulated using the Bures-Wasserstein metric on the space of PSD tensors. Finally, Section \ref{conclusion} outlines the main findings and offers concluding observations.

\section{Definitions and Preliminaries}\label{def}

The T-product is a multiplication operation defined for third-order tensors. A third-order tensor  
\[
\mathcal{A} = \left[a_{i j k}\right], \quad 1 \leq i \leq m, \, 1 \leq j \leq n, \, 1 \leq k \leq p,
\]
is an array consisting of \(m \times n \times p\) entries. The set of all such tensors over the real or complex fields is denoted by \(\mathbb{R}^{m \times n \times p}\) or \(\mathbb{C}^{m \times n \times p}\), respectively. For any tensor \(\mathcal{A} \in \mathbb{R}^{m \times n \times p}\), its \textbf{frontal slices} are denoted as \(A^{(k)} = \mathcal{A}(:, :, k) \in \mathbb{R}^{m \times n}\) for \(k = 1, 2, \ldots, p\).\\  

\noindent To facilitate the T-product operation, the following linear operators are introduced: \textbf{bcirc}, \textbf{unfold}, and \textbf{fold}, as described in \cite{kathrinLund}.  

\begin{enumerate}
    \item The \textbf{block circulant matrix} of \(\mathcal{A}\), denoted as \(\operatorname{bcirc}(\mathcal{A})\), is defined as:  
    \begin{equation}\label{bcircdef}
    \operatorname{bcirc}(\mathcal{A}) := \begin{bmatrix}
    A^{(1)} & A^{(p)} & A^{(p-1)} & \cdots & A^{(2)} \\
    A^{(2)} & A^{(1)} & A^{(p)} & \cdots & A^{(3)} \\
    \vdots & \vdots & \ddots & \vdots & \vdots \\
    A^{(p)} & A^{(p-1)} & \cdots & A^{(2)} & A^{(1)}
    \end{bmatrix}.
    \end{equation}  

    \item The \textbf{unfolding} operator rearranges the tensor \(\mathcal{A}\) into a tall matrix as follows:  
    \[
    \operatorname{unfold}(\mathcal{A}) := \begin{bmatrix}
    A^{(1)} \\
    A^{(2)} \\
    \vdots \\
    A^{(p)}
    \end{bmatrix}.
    \]

    \item The \textbf{folding} operator is defined as the inverse of the unfolding operator, mapping a tall matrix back to its tensor form:  
    \[
    \operatorname{fold}(\operatorname{unfold}(\mathcal{A})) = \mathcal{A}.
    \]  
\end{enumerate}

\begin{definition}(T-product \cite{kathrinLund})\label{tpdef}
    Let $\mathcal{A} \in \mathbb{R}^{m \times n \times p}$ and $\mathcal{B} \in \mathbb{R}^{n \times s \times p}$. Then the T-product $\mathcal{A} * \mathcal{B}$ is an $m \times s \times p$ tensor defined by
$$
\mathcal{A} * \mathcal{B}:=\text { fold(bcirc }(\mathcal{A}) \text { unfold }(\mathcal{B}))
$$ 
\end{definition}
\begin{definition}\cite{kathrinLund} 
   If \(\mathcal{A} \in \mathbb{R}^{m \times n \times p}\), the \textbf{transpose} of \(\mathcal{A}\), denoted as \(\mathcal{A}^T\), is defined by first transposing each frontal slice \(A^{(k)}\), where \(A^{(k)} = \mathcal{A}(:, :, k)\), to obtain \(A^{(k)T}\). Then, the ordering of the transposed slices is adjusted such that the first frontal slice remains unchanged, while the subsequent slices \(A^{(2)T}, \ldots, A^{(p)T}\) are reversed in order, resulting in the sequence \(A^{(1)T}, A^{(p)T}, A^{(p-1)T}, \ldots, A^{(2)T}\). Similarly, the \textbf{conjugate transpose} of \(\mathcal{A}\), denoted as \(\mathcal{A}^*\), is obtained by first applying the conjugate transpose operation to each frontal slice \(A^{(k)}\), producing \(A^{(k)*}\). The ordering of the conjugate transposed slices is then adjusted in the same manner as for the transpose, resulting in the sequence \(A^{(1)*}, A^{(p)*}, A^{(p-1)*}, \ldots, A^{(2)*}\).

\end{definition}


\begin{lemma}\label{bcircprop} (See \cite{kathrinLund})  
Let \(\mathcal{A} \in \mathbb{R}^{m \times n \times p}\) and \(\mathcal{B} \in \mathbb{R}^{n \times s \times p}\). The following properties hold:  
\begin{enumerate}

    \item The operator \(\operatorname{bcirc}\) is linear, i.e., for tensors \(\mathcal{A}\) and \(\mathcal{B}\) of the same size and constants \(\alpha, \beta\),  
\[
\operatorname{bcirc}(\alpha \mathcal{A} + \beta \mathcal{B}) = \alpha \operatorname{bcirc}(\mathcal{A}) + \beta \operatorname{bcirc}(\mathcal{B}).
\]  
    \item The \(\operatorname{bcirc}\) operator preserves the T-product,  
\[
\operatorname{bcirc}(\mathcal{A} * \mathcal{B}) = \operatorname{bcirc}(\mathcal{A}) \operatorname{bcirc}(\mathcal{B}).
\]  
    \item  The \(\operatorname{bcirc}\) operator commutes with transposition and conjugate transposition,  
\[
\operatorname{bcirc}\left(\mathcal{A}^T\right) = \left(\operatorname{bcirc}(\mathcal{A})\right)^T \quad \text{and} \quad \operatorname{bcirc}\left(\mathcal{A}^*\right) = \left(\operatorname{bcirc}(\mathcal{A})\right)^*.
\]
\end{enumerate}

\end{lemma}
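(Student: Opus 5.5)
Each of the three properties comes directly from the definition \eqref{bcircdef}. I would use a compact description of $\operatorname{bcirc}$: with indices taken mod $p$ in $\{1,\dots,p\}$, its $(i,j)$ block is $A^{(i-j+1)}$ for $1\le i,j\le p$. Equivalently, with $Z$ the $p\times p$ cyclic shift permutation matrix,
\[
\operatorname{bcirc}(\mathcal{A})=\sum_{k=1}^{p} Z^{k-1}\otimes A^{(k)} .
\]

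\textbf{Linearity.} The frontal slices of $\alpha\mathcal{A}+\beta\mathcal{B}$ are $\alpha A^{(k)}+\beta B^{(k)}$. The $(i,j)$ block of $\operatorname{bcirc}(\alpha\mathcal{A}+\beta\mathcal{B})$ is therefore $\alpha A^{(i-j+1)}+\beta B^{(i-j+1)}$, which is exactly the $(i,j)$ block of $\alpha\operatorname{bcirc}(\mathcal{A})+\beta\operatorname{bcirc}(\mathcal{B})$.

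\textbf{Multiplicativity.} By Definition~\ref{tpdef}, $\operatorname{unfold}(\mathcal{A}*\mathcal{B})=\operatorname{bcirc}(\mathcal{A})\operatorname{unfold}(\mathcal{B})$. Hence the $k$-th frontal slice of $\mathcal{C}=\mathcal{A}*\mathcal{B}$ is
\[
C^{(k)}=\sum_{l=1}^{p}A^{(k-l+1)}B^{(l)} ,
\]
a cyclic convolution of the slice sequences. Next I would compute the $(i,j)$ block of $\operatorname{bcirc}(\mathcal{A})\operatorname{bcirc}(\mathcal{B})$:
\[
\sum_{r=1}^{p}A^{(i-r+1)}B^{(r-j+1)} .
\]
Substituting $l=r-j+1$, which runs over all residues mod $p$ as $r$ does, turns this into $\sum_{l}A^{(i-j+1-l+1)}B^{(l)}=C^{(i-j+1)}$. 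That is precisely the $(i,j)$ block of $\operatorname{bcirc}(\mathcal{C})$.

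The Kronecker form gives the same result more cleanly. Since $Z^p=I$,
\[
\Bigl(\sum_k Z^{k-1}\otimes A^{(k)}\Bigr)\Bigl(\sum_l Z^{l-1}\otimes B^{(l)}\Bigr)=\sum_{k,l}Z^{k+l-2}\otimes A^{(k)}B^{(l)} .
\]
Regrouping by $k+l-1\equiv q \pmod p$ gives $\sum_q Z^{q-1}\otimes C^{(q)}$. This index bookkeeping, getting the mod-$p$ reindexing right, is the only real obstacle.

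\textbf{Transposition.} The transposed tensor has slices $\tilde A^{(1)}=A^{(1)T}$ and $\tilde A^{(k)}=A^{(p-k+2)T}$ for $k\ge2$, i.e.\ $\tilde A^{(k)}=A^{(2-k)T}$ mod $p$. So the $(i,j)$ block of $\operatorname{bcirc}(\mathcal{A}^T)$ is $\tilde A^{(i-j+1)}=A^{(j-i+1)T}$. On the other side, the $(i,j)$ block of $(\operatorname{bcirc}(\mathcal{A}))^T$ is the transpose of its $(j,i)$ block, namely $A^{(j-i+1)T}$. These agree. The conjugate transpose case is identical with $^T$ replaced by $^*$, since conjugation acts entrywise.
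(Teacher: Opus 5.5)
Your proof is correct and complete. The block description $(i,j)\mapsto A^{(i-j+1)}$ (indices mod $p$) matches \eqref{bcircdef}. The reindexing $l=r-j+1$ in the product step is right. The slice formula $\tilde A^{(k)}=A^{(2-k)T}$ correctly encodes the reversed ordering in the definition of $\mathcal{A}^T$.

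The paper gives no proof of this lemma; it only defers to \cite{kathrinLund}. So there is no argument in the text to compare against, and your direct verification makes the statement self-contained.

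Your Kronecker form $\operatorname{bcirc}(\mathcal{A})=\sum_k Z^{k-1}\otimes A^{(k)}$ also handles (iii) in one line. Since $Z^T=Z^{-1}$, you get $(\operatorname{bcirc}(\mathcal{A}))^T=\sum_k Z^{1-k}\otimes A^{(k)T}$, and reindexing $k\mapsto 2-k$ mod $p$ gives $\operatorname{bcirc}(\mathcal{A}^T)$. The same computation with $(X\otimes Y)^*=X^*\otimes Y^*$ and $Z$ real handles the conjugate transpose.

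Compared with the DFT block-diagonalization the paper uses later, your route is purely combinatorial and needs no Fourier machinery. It also works verbatim over $\mathbb{C}$, which is what the conjugate-transpose part of the lemma actually requires, even though the hypothesis is stated for $\mathbb{R}^{m\times n\times p}$. You were also right to read part (i) as applying to two tensors of the same size, despite the lemma's header giving $\mathcal{A}$ and $\mathcal{B}$ different shapes.
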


\begin{definition}\label{eignvalue}\cite{kathrinLund}
    Let $\mathcal{A} \in \mathbb{R}^{n \times n \times p}$. Suppose that $\mathcal{X} \in \mathbb{R}^{n \times 1 \times p}$ and $\mathcal{X} \neq 0$. If
$$
\mathcal{A} * \mathcal{X}=\lambda \mathcal{X}, \quad \lambda \in \mathbb{R},
$$

\noindent then $\lambda$ is called a T-eigenvalue of $\mathcal{A}$ and $\mathcal{X}$ is a T-eigenvector of $\mathcal{A}$ associated to $\lambda$.
\end{definition} 
\begin{remark}\label{rmrk1}
    It is easy to find that for every T-eigenvalue $\lambda$ of $\mathcal{A}$, we have
$$
\operatorname{bcirc}(\mathcal{A}) \operatorname{unfold}(\mathcal{X})=\lambda \cdot \operatorname{unfold}(\mathcal{X}),
$$
i.e., all T-eigenvalues of $\mathcal{A}$ are actually eigenvalues of the matrix bcirc $(\mathcal{A})$ and vice versa. 
\end{remark} 

\begin{definition}\cite{kathrinLund}
    Let $\mathcal{A} \in \mathbb{R}^{n \times n \times p}$. Then $\mathcal{A}$ is called symmetric if $\mathcal{A}=\mathcal{A}^T$ and is called Hermitian if $\mathcal{A}=\mathcal{A}^*$.
\end{definition}
 \noindent Therefore, by Lemma \ref{bcircprop} (iii), we have the following basic properties of symmetric and Hermitian tensors.

\begin{theorem}\cite{kathrinLund}
    Let $\mathcal{A} \in \mathbb{R}^{n \times n \times p}$ be Hermitian. Then the $T$-eigenvalues of $\mathcal{A}$ are all real.
\end{theorem}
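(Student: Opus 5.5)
The plan is to transfer the question to the matrix $\operatorname{bcirc}(\mathcal{A})$ and then invoke the classical fact that Hermitian matrices have real spectrum. By Remark \ref{rmrk1}, the T-eigenvalues of $\mathcal{A}$ coincide with the eigenvalues of $\operatorname{bcirc}(\mathcal{A})$. It therefore suffices to show that $\operatorname{bcirc}(\mathcal{A})$ is a Hermitian matrix.

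\textbf{Step 1.} Since $\mathcal{A}=\mathcal{A}^*$, we apply $\operatorname{bcirc}$ to both sides. Lemma \ref{bcircprop}(iii) then gives
\[
\operatorname{bcirc}(\mathcal{A})=\operatorname{bcirc}(\mathcal{A}^*)=\left(\operatorname{bcirc}(\mathcal{A})\right)^*,
\]
so $\operatorname{bcirc}(\mathcal{A})\in\mathbb{C}^{np\times np}$ is Hermitian.

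\textbf{Step 2.} Let $\lambda$ be a T-eigenvalue with T-eigenvector $\mathcal{X}\neq 0$, and set $x=\operatorname{unfold}(\mathcal{X})$. Since unfold is injective, $x\neq 0$. By Remark \ref{rmrk1}, $\operatorname{bcirc}(\mathcal{A})x=\lambda x$. The standard argument applies: $\lambda\, x^*x = x^*\operatorname{bcirc}(\mathcal{A})x$. Taking the conjugate transpose and using Step 1, this scalar equals its own conjugate, so it is real. As $x^*x>0$, $\lambda$ is real.

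There is no real obstacle here. The one subtlety is that Definition \ref{eignvalue} already assumes $\lambda\in\mathbb{R}$, so the statement should be read as follows: every eigenvalue of $\operatorname{bcirc}(\mathcal{A})$ is real, allowing complex $\lambda$ and complex $\mathcal{X}$ a priori. Once complex eigenpairs are allowed, the correspondence in Remark \ref{rmrk1} still holds verbatim, since the fold/unfold identities are linear. Step 2 then gives the conclusion.
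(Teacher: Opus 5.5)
Your proof is correct and follows the paper's route exactly: the paper justifies this statement only by saying that Lemma \ref{bcircprop}(iii) makes $\operatorname{bcirc}(\mathcal{A})$ Hermitian, after which Remark \ref{rmrk1} passes the real spectrum of Hermitian matrices to the T-eigenvalues, which is precisely your Steps 1 and 2. Your observation that Definition \ref{eignvalue} already requires $\lambda\in\mathbb{R}$, so the statement only has content once complex eigenpairs are allowed, is a fair point the paper leaves unaddressed.
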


\begin{definition} \cite{minmax}
  The smallest (or largest) eigenvalue of a tensor \( \mathcal{A} \) is denoted by \( \mu_{\min}(\mathcal{A}) \) (or \( \mu_{\max}(\mathcal{A}) \)). The spectral radius of a tensor \( \mathcal{A} \) is defined as  
\[
\rho(\mathcal{A}) = \max \{ |\lambda| : \lambda \in \sigma(\mathcal{A}) \},
\]  
where \( \sigma(\mathcal{A}) \) denotes the spectrum of \( \mathcal{A} \), which is the set of all eigenvalues of \( \mathcal{A} \).

\end{definition}

\noindent The following section explores various bounds, highlighting the relationship between trace and eigenvalues. It examines how these bounds are affected by different conditions on tensors, such as positive definiteness and Hermitian properties. Since trace computation is straightforward, it offers a significant reduction in computational time compared to the more complex process of eigenvalue calculation.

\section{Spectral analysis: Trace-based eigenvalue bounds }\label{results}
The trace of a tensor in the T-product framework plays a fundamental role in bounding, estimating, and computing eigenvalues. The trace provides key insights into spectral properties, helping in numerical analysis, optimization, and machine learning. Below are the main ways in which the trace is useful.




\begin{theorem}
Let $\mathcal{A}$ be a real third-order $n$-dimensional tensor, and let $\lambda$ be any T-eigenvalue of $\mathcal{A}$. Then,
\begin{equation}  
\lambda = (\operatorname{unfold}(\mathcal{X}))^t \left(\frac{\operatorname{bcirc}(\mathcal{A}) + (\operatorname{bcirc}(\mathcal{A}))^t}{2}\right) \operatorname{unfold}(\mathcal{X}),
\end{equation}  
where $\mathcal{A} \in \mathbb{C}^{n \times n \times p}$, $\mathcal{X} \in \mathbb{C}^{n \times 1 \times p}$, and $\operatorname{unfold}(\mathcal{X})$ is a unit vector.
\end{theorem}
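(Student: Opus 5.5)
Let $\mathcal{X}$ denote a T-eigenvector associated with $\lambda$, and put $x = \operatorname{unfold}(\mathcal{X})$, normalized so that $x^t x = 1$. The statement should be read as concerning a real T-eigenvalue with a real eigenvector. This is the setting of Definition \ref{eignvalue}, where $\lambda \in \mathbb{R}$ and $\mathcal{X} \in \mathbb{R}^{n\times 1\times p}$. If complex entries are allowed, $(\cdot)^t$ has to be read as a conjugate transpose, and then only the real part of $\lambda$ is recovered. The plan is to convert the tensor eigen-equation into a matrix eigen-equation using Remark \ref{rmrk1}. After that, the formula follows from the standard identity that a real quadratic form sees only the symmetric part of a matrix.

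First, let $M = \operatorname{bcirc}(\mathcal{A})$. Remark \ref{rmrk1}, which is just Definition \ref{tpdef} together with fold/unfold being mutually inverse, turns $\mathcal{A} * \mathcal{X} = \lambda \mathcal{X}$ into $Mx = \lambda x$. Multiplying on the left by $x^t$ gives
\[
x^t M x = \lambda\, x^t x = \lambda .
\]

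Second, $x^t M x$ is a scalar, so it equals its own transpose. Hence $x^t M x = (x^t M x)^t = x^t M^t x$. Averaging the two expressions gives
\[
\lambda = x^t\left(\frac{M + M^t}{2}\right)x ,
\]
which is exactly the claimed identity. If we want to phrase this tensorially, Lemma \ref{bcircprop}(iii) identifies $M^t$ with $\operatorname{bcirc}(\mathcal{A}^T)$. The symmetric part of $M$ is then $\operatorname{bcirc}\bigl((\mathcal{A}+\mathcal{A}^T)/2\bigr)$ by linearity (Lemma \ref{bcircprop}(i)).

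The main obstacle is the real/complex issue rather than any computation. For a non-symmetric real $\mathcal{A}$, $M$ can have non-real eigenvalues with complex eigenvectors. In that case $x^t x$ need not equal $1$ for a unit vector in the Hermitian sense, and $x^t M x$ is not transpose-invariant in the way used above. I would handle this by restricting explicitly to real eigenpairs. An alternative is to use $x^*$: then $x^* M x = \lambda$ and $x^* M^* x = \bar\lambda$, so the Hermitian part yields $\operatorname{Re}\lambda = x^*\frac{M+M^*}{2}x$. This matches the statement whenever $\lambda$ is real.
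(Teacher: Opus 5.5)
Your proof is correct, and it differs from the paper's at the one step that carries the content. Both arguments start from $Mx=\lambda x$, where $M=\operatorname{bcirc}(\mathcal{A})$ and $x=\operatorname{unfold}(\mathcal{X})$, and both split $x^t(M+M^t)x$ into two terms.

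The paper handles the second term by writing the eigen-equation as $x^tM=\lambda x^t$ and then substituting $M^tx=\lambda x$. In other words, it treats $x$ as a left eigenvector as well as a right one. That is not implied by Remark~\ref{rmrk1}, and it fails when $\operatorname{bcirc}(\mathcal{A})$ is not symmetric. Already for $p=1$, take $M$ with rows $(1,1)$ and $(0,2)$, $\lambda=1$, $x=e_1$: then $M^tx=(1,1)^t\neq x$, although $x^tM^tx=1=\lambda$ still holds.

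Your observation that the $1\times 1$ quantity $x^tMx$ equals its own transpose $x^tM^tx$ is exactly the justification the paper lacks. It gives the identity for every real eigenpair of an arbitrary real $\mathcal{A}$. The paper's route, as written, is valid only when $x$ is also a left eigenvector, for instance when $\operatorname{bcirc}(\mathcal{A})$ is symmetric or normal.

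Your explicit restriction to real $\lambda$ and real $\mathcal{X}$, as in Definition~\ref{eignvalue}, sensibly resolves the mixed real/complex hypotheses of the statement. Rewriting $\frac{M+M^t}{2}$ as $\operatorname{bcirc}\bigl(\frac{\mathcal{A}+\mathcal{A}^T}{2}\bigr)$ also ties the result to the $\bar{\mathcal{A}}$ used later in the paper.

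One small correction to your last paragraph. The plain transpose of a $1\times1$ matrix is itself over any field, so $x^tMx=x^tM^tx$ holds for complex $x$ as well. What breaks in the complex case is only the normalization: a vector with $x^*x=1$ can have $x^tx\neq 1$, even $x^tx=0$. So in general you get $\lambda\,x^tx=x^t\frac{M+M^t}{2}x$.
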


\noindent \textbf{Proof:} From Remark \ref{rmrk1}, we know that the eigenvalues of the tensor $\mathcal{A}$ are the same as those of $\operatorname{bcirc}(\mathcal{A})$. This allows us to study eigenvalue-related properties of $\mathcal{A}$ by working directly with $\operatorname{bcirc}(\mathcal{A})$. Let $\operatorname{bcirc}(\mathcal{A}) = \mathcal{B}$, and its transpose $(\operatorname{bcirc}(\mathcal{A}))^t = \mathcal{B}^t$. From Remark \ref{rmrk1}, the T-eigenvalue equation can be expressed as
\begin{equation}  
(\operatorname{unfold}(\mathcal{X}))^t \mathcal{B} = \lambda \cdot (\operatorname{unfold}(\mathcal{X}))^t.  
\end{equation}  

\noindent Now, consider the symmetric form of $\mathcal{B}$ and $\mathcal{B}^t$. We start with the expression
\begin{align*}  
(\operatorname{unfold}(\mathcal{X}))^t (\mathcal{B} + \mathcal{B}^t) \operatorname{unfold}(\mathcal{X}).  
\end{align*}  

\noindent Expanding this, we get
\begin{align*}  
& (\operatorname{unfold}(\mathcal{X}))^t \mathcal{B} \operatorname{unfold}(\mathcal{X}) + (\operatorname{unfold}(\mathcal{X}))^t \mathcal{B}^t \operatorname{unfold}(\mathcal{X}).  
\end{align*}  

\noindent Using the eigenvalue equation for $\mathcal{B}$, we substitute $\mathcal{B} \operatorname{unfold}(\mathcal{X}) = \lambda \operatorname{unfold}(\mathcal{X})$ and $\mathcal{B}^t \operatorname{unfold}(\mathcal{X}) = \lambda \operatorname{unfold}(\mathcal{X})$
\begin{align*}  
& (\operatorname{unfold}(\mathcal{X}))^t \lambda \operatorname{unfold}(\mathcal{X}) + (\operatorname{unfold}(\mathcal{X}))^t \lambda \operatorname{unfold}(\mathcal{X}).  
\end{align*}  

\noindent Simplifying further
\begin{align*}  
& 2 \lambda \cdot (\operatorname{unfold}(\mathcal{X}))^t \operatorname{unfold}(\mathcal{X}).  
\end{align*}  



\noindent We obtain
\begin{align*}  
\lambda = (\operatorname{unfold}(\mathcal{X}))^t \left(\frac{\mathcal{B} + \mathcal{B}^t}{2}\right) \operatorname{unfold}(\mathcal{X}).  
\end{align*}  

\noindent Finally, substituting back $\mathcal{B} = \operatorname{bcirc}(\mathcal{A})$ and $\mathcal{B}^t = (\operatorname{bcirc}(\mathcal{A}))^t$, we conclude
\begin{align*}  
\lambda = (\operatorname{unfold}(\mathcal{X}))^t \left(\frac{\operatorname{bcirc}(\mathcal{A}) + (\operatorname{bcirc}(\mathcal{A}))^t}{2}\right) \operatorname{unfold}(\mathcal{X}).  
\end{align*}  
\hfill \qed

\begin{theorem}\label{eigenboundma}
Let \(\mathcal{A}\) be a real third-order \(n\)-dimensional tensor, and let \(\lambda\) be any real \(T\)-eigenvalue of \(\mathcal{A}\). Then
\[
\mu_{\min}\left(M_{\mathcal{A}}\right) \leq \lambda \leq \mu_{\max}\left(M_{\mathcal{A}}\right), \quad \rho_T(\mathcal{A}) \leq \rho\left(M_{\mathcal{A}}\right),
\]  
where \(\mu_{\min}(M_{\mathcal{A}})\) and \(\mu_{\max}(M_{\mathcal{A}})\) denote the smallest and largest eigenvalues of \(M_{\mathcal{A}}\), respectively, and \(\rho_T(\mathcal{A})\) and \(\rho(M_{\mathcal{A}})\) represent the spectral radii of \(\mathcal{A}\) and \(M_{\mathcal{A}}\), respectively.  
\end{theorem}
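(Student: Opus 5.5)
Throughout, I read $M_{\mathcal{A}}$ as the symmetric part of the block circulant matrix, $M_{\mathcal{A}} = \tfrac12\bigl(\operatorname{bcirc}(\mathcal{A}) + (\operatorname{bcirc}(\mathcal{A}))^t\bigr)$, as in the preceding theorem. This is a real symmetric $np\times np$ matrix, so its eigenvalues are real and it admits the Rayleigh–Ritz characterization
\[
\mu_{\min}(M_{\mathcal{A}}) = \min_{\|v\|=1} v^t M_{\mathcal{A}} v, \qquad \mu_{\max}(M_{\mathcal{A}}) = \max_{\|v\|=1} v^t M_{\mathcal{A}} v .
\]
The plan is to write $\lambda$ as a Rayleigh quotient of $M_{\mathcal{A}}$ at a real unit vector and then apply this characterization.

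\textbf{Step 1: a real unit eigenvector.} Let $\lambda$ be a real T-eigenvalue. By Remark \ref{rmrk1}, $\lambda$ is an eigenvalue of the real matrix $B=\operatorname{bcirc}(\mathcal{A})$. Since $\lambda$ is real, $B-\lambda I$ is a real singular matrix, so it has a nonzero real null vector $v$. Normalize so that $\|v\|=1$.

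\textbf{Step 2: the Rayleigh quotient.} Compute $v^t B v = \lambda v^t v = \lambda$. Also $v^t B^t v = (v^t B v)^t = \lambda$, because this is a scalar. Averaging the two gives
\[
\lambda = v^t M_{\mathcal{A}} v .
\]
Only the identity $v^t B^t v = v^t B v$ is needed here; one should not try to assume $B^t v=\lambda v$. By Rayleigh–Ritz, $\mu_{\min}(M_{\mathcal{A}})\le \lambda \le \mu_{\max}(M_{\mathcal{A}})$, which is the first claim.

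\textbf{Step 3: the spectral radius inequality.} Since $M_{\mathcal{A}}$ is symmetric, $\rho(M_{\mathcal{A}})=\max\{|\mu_{\min}(M_{\mathcal{A}})|,|\mu_{\max}(M_{\mathcal{A}})|\}$. The first claim then gives $|\lambda|\le\rho(M_{\mathcal{A}})$ for every real T-eigenvalue.

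\textbf{Main obstacle.} The difficulty is that $\rho_T(\mathcal{A})$ may be attained at a complex eigenvalue of $B$. For such a $\lambda$ with complex unit eigenvector $x$, Step 2 fails, because $x^*B^tx$ need not equal $\lambda$. There are two ways to handle this:
\begin{itemize}
\item Interpret $\rho_T$ over real T-eigenvalues, consistent with Definition \ref{eignvalue}, which requires $\lambda\in\mathbb{R}$. The argument is then complete.
\item Allow complex T-eigenvalues. For a complex $\lambda$, the symmetric-part bound only controls $\operatorname{Re}\lambda = x^* M_{\mathcal{A}} x$, not $|\lambda|$. A rotated rescaling of $x$ does not fix this.
\end{itemize}
So the spectral radius claim holds as stated only for real T-eigenvalues. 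I would make that restriction explicit, matching the statement's hypothesis that $\lambda$ is real.
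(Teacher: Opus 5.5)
Your proof is correct and follows the same route as the paper: express $\lambda = y^t M_{\mathcal{A}} y$ at the normalized unfolded eigenvector, apply the Rayleigh quotient bounds, and read off $|\lambda|\le\rho(M_{\mathcal{A}})$. Two points in your version are actually sounder than the paper's. You justify that identity with the scalar identity $v^tB^tv=v^tBv$ for a real eigenvector, whereas the paper's preceding theorem substitutes $B^t\operatorname{unfold}(\mathcal{X})=\lambda\operatorname{unfold}(\mathcal{X})$, which is false in general. You also restrict $\rho_T$ to real T-eigenvalues, which is exactly what the paper tacitly relies on through Definition~\ref{eignvalue}.
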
  

\noindent \textbf{Proof:}  For convenience, let us define the symmetric matrix
\[
M_{\mathcal{A}} := \frac{\operatorname{bcirc}(\mathcal{A}) + (\operatorname{bcirc}(\mathcal{A}))^t}{2},  
\]  
additionally, let the unfolded vector representation of \(\mathcal{X}\) be denoted as
\[
\operatorname{unfold}(\mathcal{X}) := \begin{bmatrix}  
X^{(1)} \\  
X^{(2)} \\  
\vdots \\  
X^{(p)}  
\end{bmatrix} = y,  
\]  
where \(y \in \mathbb{R}^{np}\).  

\noindent The \(T\)-eigenvalue \(\lambda\) of \(\mathcal{A}\) is given by
\[
\lambda = y^t M_{\mathcal{A}} y,  
\]  
where \(y\) is a unit vector (i.e., \(y^t y = 1\)).  

\noindent Since, \(M_{\mathcal{A}}\) is a real symmetric matrix, its eigenvalues are real and bounded by its smallest and largest eigenvalues. By the Rayleigh quotient characterization of eigenvalues, we have
\[
\mu_{\min}(M_{\mathcal{A}}) \leq y^t M_{\mathcal{A}} y \leq \mu_{\max}(M_{\mathcal{A}}).  
\]  

\noindent Substituting \(\lambda = y^t M_{\mathcal{A}} y\), we obtain
\[
\mu_{\min}(M_{\mathcal{A}}) \leq \lambda \leq \mu_{\max}(M_{\mathcal{A}}).  
\]  

\noindent Furthermore, since the spectral radius \(\rho(M_{\mathcal{A}})\) is the largest absolute eigenvalue of \(M_{\mathcal{A}}\), we also have
\[
|\lambda| = |y^t M_{\mathcal{A}} y| \leq \rho(M_{\mathcal{A}}).  
\]  

\noindent Thus, the spectral radius of the tensor \(\mathcal{A}\) is bounded by the spectral radius of \(M_{\mathcal{A}}\)  
\[
\rho_T(\mathcal{A}) \leq \rho(M_{\mathcal{A}}).  
\]  
\hfill \qed 

\begin{example}
Let \(\mathcal{A}\) be a \(2 \times 2 \times 2\) real tensor defined as
\[
\mathcal{A}(:, :, 1) = \begin{bmatrix} 2 & 1 \\ 1 & 3 \end{bmatrix}, \quad  
\mathcal{A}(:, :, 2) = \begin{bmatrix} 0 & 1 \\ 1 & 0 \end{bmatrix}.
\]  
\end{example}
\noindent To illustrate the Theorem \ref{eigenboundma}, we construct an example with a third-order tensor \(\mathcal{A}\) and verify the bounds for its \(T\)-eigenvalues.  First, we define the block circulant matrix representation of \(\mathcal{A}\), \(\operatorname{bcirc}(\mathcal{A})\), as
\[
\operatorname{bcirc}(\mathcal{A}) =  
\begin{bmatrix}  
2 & 1 & 0 & 1 \\  
1 & 3 & 1 & 0 \\  
0 & 1 & 2 & 1 \\  
1 & 0 & 1 & 3  
\end{bmatrix}.
\]  
Since \(\operatorname{bcirc}(\mathcal{A})\) is symmetric, the matrix \(M_{\mathcal{A}}\) is equivalent to \(\operatorname{bcirc}(\mathcal{A})\):  
\[
M_{\mathcal{A}} = \frac{\operatorname{bcirc}(\mathcal{A}) + (\operatorname{bcirc}(\mathcal{A}))^t}{2} =  
\begin{bmatrix}  
2 & 1 & 0 & 1 \\  
1 & 3 & 1 & 0 \\  
0 & 1 & 2 & 1 \\  
1 & 0 & 1 & 3  
\end{bmatrix}.
\]  

\noindent Next, we compute the eigenvalues of \(M_{\mathcal{A}}\), which are approximately
\[
\mu_{\min}(M_{\mathcal{A}}) = 0.44, \quad \mu_{\max}(M_{\mathcal{A}}) = 4.56.
\]  

\noindent For a unit vector \(y\), the \(T\)-eigenvalue \(\lambda\) satisfies the inequality
\[
\mu_{\min}(M_{\mathcal{A}}) \leq \lambda \leq \mu_{\max}(M_{\mathcal{A}}).
\]  
For example, choosing \(y = [1, 0, 0, 0]^t\), we compute
\[
\lambda = y^t M_{\mathcal{A}} y = 2.
\]  
Clearly, \(0.44 \leq 2.0 \leq 4.56\), verifying the bounds for this eigenvalue.  Finally, we verify the spectral radius condition. The spectral radius of \(M_{\mathcal{A}}\) is
\[
\rho(M_{\mathcal{A}}) = \mu_{\max}(M_{\mathcal{A}}) = 4.12.
\]  
The spectral radius of \(\mathcal{A}\), \(\rho_T(\mathcal{A})\), satisfies the inequality
\[
\rho_T(\mathcal{A}) \leq \rho(M_{\mathcal{A}}).
\]  

\begin{theorem}
      Let $\mathcal{A}, \mathcal{B} \in \mathbb{R}^{n \times n \times p}$ be the positive semidefinite (PSD) Hermitian tensor. Then
     \begin{equation}
         \sum_{i=1}^n \lambda_i(\mathcal{A}) \lambda_{n-i+1}(\mathcal{B}) \leq \operatorname{tr}(\mathcal{A} *\mathcal{B}) \leq \sum_{i=1}^n \lambda_i(\mathcal{A}) \lambda_i(\mathcal{B}).
     \end{equation}
\end{theorem}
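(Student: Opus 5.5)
The plan is to reduce everything to the block circulant matrix, as in the proofs of the two preceding theorems, and then apply the classical trace inequality for Hermitian matrices. I read the statement under the following conventions.
\begin{itemize}
\item The trace of a square tensor is $\operatorname{tr}(\mathcal{A}) := \operatorname{tr}(\operatorname{bcirc}(\mathcal{A}))$. This equals $p\,\operatorname{tr}(A^{(1)})$, so the first-slice convention only rescales both sides and I would note this in a remark.
\item $\mathcal{A}$ is PSD Hermitian when $\operatorname{bcirc}(\mathcal{A})$ is. By Lemma \ref{bcircprop}(iii), Hermitian tensors have Hermitian block circulants.
\item $\lambda_1 \ge \lambda_2 \ge \cdots$ are the T-eigenvalues listed in decreasing order with multiplicity. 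By Remark \ref{rmrk1} these are exactly the eigenvalues of $\operatorname{bcirc}(\mathcal{A})$.
\item The upper summation index $n$ is the total number $N = np$ of such eigenvalues, i.e. the size of $\operatorname{bcirc}(\mathcal{A})$.
\end{itemize}
Fixing these conventions, and in particular the index range, is the main obstacle: the inequality is false if one sums over only $n$ of the $np$ eigenvalues. I would state the adopted conventions explicitly before the proof.

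\textbf{Step 1 (reduction).} Put $P = \operatorname{bcirc}(\mathcal{A})$ and $Q = \operatorname{bcirc}(\mathcal{B})$. By Lemma \ref{bcircprop}(ii), $\operatorname{bcirc}(\mathcal{A}*\mathcal{B}) = PQ$, so $\operatorname{tr}(\mathcal{A}*\mathcal{B}) = \operatorname{tr}(PQ)$. Both $P$ and $Q$ are $N\times N$ PSD Hermitian, and $\lambda_i(\mathcal{A}) = \lambda_i(P)$, $\lambda_i(\mathcal{B}) = \lambda_i(Q)$. The claim therefore becomes the matrix inequality
\[
\sum_{i=1}^N a_i b_{N-i+1} \le \operatorname{tr}(PQ) \le \sum_{i=1}^N a_i b_i ,
\]
where $a_i = \lambda_i(P)$ and $b_i = \lambda_i(Q)$, both in decreasing order.

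\textbf{Step 2 (diagonalize $P$).} Write $P = U\operatorname{diag}(a)U^*$ with $U$ unitary, and set $d_i = (U^*QU)_{ii}$. Then $\operatorname{tr}(PQ) = \sum_i a_i d_i$. By Schur's theorem the vector $d$ is majorized by $b$: for every $k$, the sum of the $k$ largest entries of $d$ is at most $b_1+\cdots+b_k$, and $\sum_i d_i = \sum_i b_i$.

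\textbf{Step 3 (upper bound by Abel summation).} Let $d_{[1]} \ge \cdots \ge d_{[N]}$ be the decreasing rearrangement of $d$. Since $a$ is decreasing, the rearrangement inequality gives $\sum_i a_i d_i \le \sum_i a_i d_{[i]}$. Summation by parts then gives
\[
\sum_i a_i d_{[i]} = \sum_{k=1}^{N-1}(a_k - a_{k+1})\sum_{i\le k} d_{[i]} + a_N\sum_{i} d_{[i]} .
\]
Each coefficient $a_k - a_{k+1}$ is nonnegative, and the last total sum equals $\sum_i b_i$. Applying the majorization to each partial sum, the right-hand side is at most the same expression with $b$ in place of $d_{[\cdot]}$, which is $\sum_i a_i b_i$.

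\textbf{Step 4 (lower bound).} Use the increasing rearrangement $d_{[N]} \le \cdots \le d_{[1]}$, paired against the decreasing $a$, so that $\sum_i a_i d_i \ge \sum_i a_i d_{[N-i+1]}$. Majorization also says that the sum of the $k$ smallest entries of $d$ is at least $b_N + \cdots + b_{N-k+1}$. The same Abel summation, with the inequalities reversed, then yields $\sum_i a_i b_{N-i+1}$.

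PSD-ness is not essential to the argument beyond guaranteeing Hermitian structure and real, ordered eigenvalues. I would remark on this in connection with the paper's later discussion of relaxing the PSD assumption.
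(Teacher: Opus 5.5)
Your proof is correct, but it takes a different route from the paper's.

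\textbf{The paper's route.} The paper block-diagonalizes with the FFT and writes $\operatorname{tr}(\mathcal{A}*\mathcal{B})=\sum_{k=1}^p\operatorname{tr}(\widehat{\mathcal{A}}^{(k)}\widehat{\mathcal{B}}^{(k)})$. It then applies von Neumann's inequality (cited, not proved) to each $n\times n$ Hermitian Fourier slice, sums over $k$, and asserts that the resulting double sums ``correspond'' to the tensor-eigenvalue sums. That last step is not automatic. The quantity $\sum_k\sum_i\lambda_i(\widehat{\mathcal{A}}^{(k)})\lambda_i(\widehat{\mathcal{B}}^{(k)})$ is only one particular pairing of the $np$ eigenvalues of $\operatorname{bcirc}(\mathcal{A})$ with those of $\operatorname{bcirc}(\mathcal{B})$. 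A rearrangement inequality is therefore needed to pass to the globally sorted (or reverse-sorted) pairing, and the index range must be $np$, exactly as you point out.

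\textbf{What your route buys.} You work with the full $np\times np$ matrices from the start, so this merging step never arises. You also prove the matrix inequality yourself via Schur majorization and Abel summation rather than citing it. Your argument makes visible that only the Hermitian property is used, so it also yields the paper's next theorem (general Hermitian $\mathcal{A},\mathcal{B}$) without the shift by $\alpha\mathcal{I}$.

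\textbf{What the paper's route buys.} Once the rearrangement step is supplied, the slice-wise argument gives sharper intermediate bounds:
$\operatorname{tr}(\mathcal{A}*\mathcal{B})\le\sum_k\sum_i\lambda_i(\widehat{\mathcal{A}}^{(k)})\lambda_i(\widehat{\mathcal{B}}^{(k)})\le\sum_{i=1}^{np}\lambda_i(\mathcal{A})\lambda_i(\mathcal{B})$, and analogously
$\operatorname{tr}(\mathcal{A}*\mathcal{B})\ge\sum_k\sum_i\lambda_i(\widehat{\mathcal{A}}^{(k)})\lambda_{n-i+1}(\widehat{\mathcal{B}}^{(k)})\ge\sum_{i=1}^{np}\lambda_i(\mathcal{A})\lambda_{np-i+1}(\mathcal{B})$.
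These are computable from $p$ eigenproblems of size $n$ instead of one of size $np$.

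\textbf{A correction to your side remark.} Under the first-slice trace convention, only the middle term is divided by $p$, so the eigenvalue bounds must be divided by $p$ as well. Otherwise the lower bound already fails for $\mathcal{A}=\mathcal{B}=\mathcal{I}$ with $p\ge 2$: the middle term is $n$ while the lower bound is $np$.
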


\begin{proof}
Let $\mathcal{A}, \mathcal{B} \in \mathbb{R}^{n \times n \times p}$ be third-order tensors. Assume $\mathcal{A}$ is a positive semidefinite (PSD) Hermitian tensor, and $\mathcal{B}$ is also Hermitian. Under the $T$-product, the block circulant matrix representation of a tensor $\mathcal{A}$ is given by
\[
\operatorname{bcirc}(\mathcal{A}) \in \mathbb{R}^{np \times np}.
\]
The Fourier transform diagonalizes the block circulant matrix
\[
\widehat{\mathcal{A}} = \text{fft}(\mathcal{A}, [\,], 3),
\]
where $\widehat{\mathcal{A}}$ is the tensor in the Fourier domain. Each frontal slice $\widehat{\mathcal{A}}^{(k)}$ of $\widehat{\mathcal{A}}$ corresponds to a diagonal block of $\operatorname{bcirc}(\mathcal{A})$. Similarly, for $\mathcal{B}$, we have
\[
\widehat{\mathcal{B}} = \text{fft}(\mathcal{B}, [\,], 3).
\]
  
\noindent Since $\mathcal{A}$ and $\mathcal{B}$ are Hermitian tensors, each frontal slice $\widehat{\mathcal{A}}^{(k)}$ and $\widehat{\mathcal{B}}^{(k)}$ is Hermitian. The eigenvalues of $\mathcal{A}$ and $\mathcal{B}$ can thus be computed as the eigenvalues of the slices $\widehat{\mathcal{A}}^{(k)}$ and $\widehat{\mathcal{B}}^{(k)}$, for $k = 1, \dots, p$.

\noindent The trace of the $T$-product $\mathcal{A} * \mathcal{B}$ is defined as
\[
\operatorname{tr}(\mathcal{A} * \mathcal{B}) = \sum_{k=1}^p \operatorname{tr}(\widehat{\mathcal{A}}^{(k)} \widehat{\mathcal{B}}^{(k)}).
\]
By the spectral theorem for Hermitian matrices, we can diagonalize $\widehat{\mathcal{A}}^{(k)}$ and $\widehat{\mathcal{B}}^{(k)}$ as
\[
\widehat{\mathcal{A}}^{(k)} = U^{(k)} \Lambda^{(k)} U^{(k)*}, \quad \widehat{\mathcal{B}}^{(k)} = V^{(k)} \Gamma^{(k)} V^{(k)*},
\]
where $\Lambda^{(k)}$ and $\Gamma^{(k)}$ are diagonal matrices containing the eigenvalues of $\widehat{\mathcal{A}}^{(k)}$ and $\widehat{\mathcal{B}}^{(k)}$, respectively.

\noindent Applying von Neumann's trace inequality, for each $k$, the trace of the product $\widehat{\mathcal{A}}^{(k)} \widehat{\mathcal{B}}^{(k)}$ satisfies von Neumann's trace inequality
\[
\sum_{i=1}^n \lambda_i(\widehat{\mathcal{A}}^{(k)}) \lambda_{n-i+1}(\widehat{\mathcal{B}}^{(k)}) \leq \operatorname{tr}(\widehat{\mathcal{A}}^{(k)} \widehat{\mathcal{B}}^{(k)}) \leq \sum_{i=1}^n \lambda_i(\widehat{\mathcal{A}}^{(k)}) \lambda_i(\widehat{\mathcal{B}}^{(k)}).
\]
 
\noindent Summing the inequalities over all $k = 1, \dots, p$, we obtain
\[
\sum_{k=1}^p \sum_{i=1}^n \lambda_i(\widehat{\mathcal{A}}^{(k)}) \lambda_{n-i+1}(\widehat{\mathcal{B}}^{(k)}) \leq \sum_{k=1}^p \operatorname{tr}(\widehat{\mathcal{A}}^{(k)} \widehat{\mathcal{B}}^{(k)}) \leq \sum_{k=1}^p \sum_{i=1}^n \lambda_i(\widehat{\mathcal{A}}^{(k)}) \lambda_i(\widehat{\mathcal{B}}^{(k)}).
\]
 
\noindent Using the properties of the Fourier transform and the $T$-product, the eigenvalues of $\mathcal{A}$ and $\mathcal{B}$ correspond to the eigenvalues of $\widehat{\mathcal{A}}$ and $\widehat{\mathcal{B}}$, respectively. Therefore
\[
\sum_{i=1}^n \lambda_i(\mathcal{A}) \lambda_{n-i+1}(\mathcal{B}) \leq \operatorname{tr}(\mathcal{A} * \mathcal{B}) \leq \sum_{i=1}^n \lambda_i(\mathcal{A}) \lambda_i(\mathcal{B}).
\]
\end{proof}

\noindent This trace inequality is now extended to apply to the product of two arbitrary Hermitian tensors.

\begin{theorem}
    Let $\mathcal{A}, \mathcal{B} \in \mathbb{R}^{n \times n \times p}$ be Hermitian tensors. Then the following inequality holds
    \[
    \sum_{i=1}^n \lambda_i(\mathcal{A}) \lambda_{n-i+1}(\mathcal{B}) \leq \operatorname{tr}(\mathcal{A} * \mathcal{B}) \leq \sum_{i=1}^n \lambda_i(\mathcal{A}) \lambda_i(\mathcal{B}),
    \]
    where $\lambda_i(\mathcal{A})$ and $\lambda_i(\mathcal{B})$ denote the eigenvalues of $\mathcal{A}$ and $\mathcal{B}$, respectively, arranged in non-increasing order, and $*$ represents the T-product for tensors.
\end{theorem}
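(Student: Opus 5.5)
The plan is to reduce the general Hermitian case to the PSD case already established in the preceding theorem, by shifting both tensors by multiples of the identity tensor. Let $\mathcal{I} \in \mathbb{R}^{n\times n\times p}$ denote the identity tensor under the T-product: its first frontal slice is $I_n$ and all other slices are zero, so $\operatorname{bcirc}(\mathcal{I}) = I_{np}$. Choose real constants $\alpha,\beta \ge 0$ with $\alpha \ge -\mu_{\min}(\mathcal{A})$ and $\beta \ge -\mu_{\min}(\mathcal{B})$. Set $\mathcal{A}_\alpha := \mathcal{A} + \alpha\mathcal{I}$ and $\mathcal{B}_\beta := \mathcal{B}+\beta\mathcal{I}$.

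\textbf{Step 1: the shifted tensors satisfy the hypotheses of the PSD theorem.} By linearity of $\operatorname{bcirc}$ (Lemma \ref{bcircprop}(i)) we have $\operatorname{bcirc}(\mathcal{A}_\alpha) = \operatorname{bcirc}(\mathcal{A}) + \alpha I_{np}$. This matrix is Hermitian by Lemma \ref{bcircprop}(iii). By Remark \ref{rmrk1}, the T-eigenvalues of $\mathcal{A}_\alpha$ are exactly $\lambda_i(\mathcal{A})+\alpha$, with the same non-increasing ordering, and all of them are nonnegative. The same holds for $\mathcal{B}_\beta$ with $\beta$ in place of $\alpha$. Hence $\mathcal{A}_\alpha$ and $\mathcal{B}_\beta$ are PSD Hermitian tensors, and the preceding theorem gives
\[
\sum_{i} (\lambda_i(\mathcal{A})+\alpha)(\lambda_{N-i+1}(\mathcal{B})+\beta) \le \operatorname{tr}(\mathcal{A}_\alpha * \mathcal{B}_\beta) \le \sum_{i} (\lambda_i(\mathcal{A})+\alpha)(\lambda_i(\mathcal{B})+\beta).
\]

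\textbf{Step 2: expand both sides and cancel the shift terms.} By Lemma \ref{bcircprop}(ii), and since $\mathcal{I}$ is the T-product identity, the middle term is
\[
\operatorname{tr}(\mathcal{A}_\alpha * \mathcal{B}_\beta) = \operatorname{tr}(\mathcal{A}*\mathcal{B}) + \beta\operatorname{tr}(\mathcal{A}) + \alpha\operatorname{tr}(\mathcal{B}) + \alpha\beta\,\operatorname{tr}(\mathcal{I}).
\]
Expanding the two eigenvalue sums gives the main sums $\sum_i \lambda_i(\mathcal{A})\lambda_{N-i+1}(\mathcal{B})$ and $\sum_i\lambda_i(\mathcal{A})\lambda_i(\mathcal{B})$, respectively. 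Each also produces the extra terms $\beta\sum_i\lambda_i(\mathcal{A}) + \alpha\sum_i\lambda_i(\mathcal{B}) + \alpha\beta N$. This extra part does not depend on the pairing, because reversing the order of the $\mathcal{B}$-eigenvalues does not change their sum. Since the trace equals the sum of the eigenvalues, the extra terms coincide exactly with $\beta\operatorname{tr}(\mathcal{A})+\alpha\operatorname{tr}(\mathcal{B})+\alpha\beta\operatorname{tr}(\mathcal{I})$. Subtracting them from all three members yields the claimed inequality.

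\textbf{Main obstacle: keeping the normalization consistent.} The delicate point is not the algebra but the normalization of trace and eigenvalue count. The cancellation in Step 2 requires the trace used on the left to equal the sum of exactly the eigenvalues appearing in the sums. I would therefore fix the convention $\operatorname{tr}(\mathcal{C}) := \operatorname{tr}(\operatorname{bcirc}(\mathcal{C}))$, which equals $\sum_k \operatorname{tr}(\widehat{\mathcal{C}}^{(k)})$ as in the previous proof. With this convention, $\lambda_1 \ge \dots \ge \lambda_N$ runs over all $N = np$ eigenvalues of $\operatorname{bcirc}$ (equivalently, of all Fourier slices), and $\operatorname{tr}(\mathcal{I}) = N$. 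This reading is what is needed for the identity $\sum_i\lambda_i = \operatorname{tr}$. As a sanity check, I would also verify the result directly: apply the classical von Neumann / rearrangement inequality for Hermitian matrices to $\operatorname{bcirc}(\mathcal{A})$ and $\operatorname{bcirc}(\mathcal{B})$. That inequality is itself proved by the same shift argument, which confirms the statement under this convention.
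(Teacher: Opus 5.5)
Your argument is correct and follows the same route as the paper. You shift both tensors by multiples of the identity tensor to reach the PSD case, invoke the preceding PSD inequality (equivalently, the matrix trace inequality for $\operatorname{bcirc}(\mathcal{A})$ and $\operatorname{bcirc}(\mathcal{B})$), and then remove the shift terms. Your cancellation step is more careful than the paper's. You subtract the pairing-independent terms $\beta\operatorname{tr}(\mathcal{A})+\alpha\operatorname{tr}(\mathcal{B})+\alpha\beta\operatorname{tr}(\mathcal{I})$ exactly, with $\operatorname{tr}(\mathcal{I})=np$ and the sums running over all $N=np$ eigenvalues of $\operatorname{bcirc}$. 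The paper instead lets the correction terms ``vanish as $\alpha\to 0$'', which is not legitimate because $\alpha$ must stay large enough for the shifted tensors to be PSD.
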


\begin{proof}
    To establish the result, we use the properties of Hermitian tensors and their eigenvalues. A Hermitian tensor $\mathcal{A}$ satisfies the condition that $\operatorname{bcirc}(\mathcal{A})$ (its block circulant matrix representation) is Hermitian. Consequently, the eigenvalues of $\mathcal{A}$ correspond to those of $\operatorname{bcirc}(\mathcal{A})$.

  \noindent  For simplicity, we assume that both tensors $\mathcal{A}$ and $\mathcal{B}$ are not only Hermitian but also positive semidefinite. If they are not, we can always transform them into positive semidefinite tensors by adding a sufficiently large multiple of the identity tensor. Let $\alpha > 0$ be a scalar such that
    \[
    \mathcal{A} + \alpha \mathcal{I} \quad \text{and} \quad \mathcal{B} + \alpha \mathcal{I}
    \]
    are positive semidefinite Hermitian tensors, where $\mathcal{I}$ is the identity tensor.

   \noindent The trace of the T-product of $\mathcal{A}$ and $\mathcal{B}$ can be expressed in terms of their eigenvalues as
    \[
    \operatorname{tr}(\mathcal{A} * \mathcal{B}) = \sum_{i=1}^n \lambda_i(\mathcal{A}) \lambda_i(\mathcal{B}),
    \]
    where $*$ denotes the T-product for tensors.

   \noindent The majorization theorem for Hermitian matrices can be extended to tensors via their block circulant representations. For the eigenvalues of $\mathcal{A}$ and $\mathcal{B}$, we obtain
    \[
    \sum_{i=1}^n \lambda_i(\mathcal{A}) \lambda_{n-i+1}(\mathcal{B}) \leq \operatorname{tr}(\mathcal{A} * \mathcal{B}) \leq \sum_{i=1}^n \lambda_i(\mathcal{A}) \lambda_i(\mathcal{B}).
    \]

    \noindent To account for the transformation to positive semidefinite tensors, we add a correction term proportional to $\alpha$:
    \[
    \operatorname{tr}((\mathcal{A} + \alpha \mathcal{I}) * (\mathcal{B} + \alpha \mathcal{I})).
    \]
    Expanding this expression yields
    \[
    \operatorname{tr}(\mathcal{A} * \mathcal{B}) + \alpha (\operatorname{tr}(\mathcal{A}) + \operatorname{tr}(\mathcal{B})) + n \alpha^2,
    \]
    where $n$ is the dimensionality of the tensors.

    \noindent Thus, we establish that the trace of the T-product satisfies the inequality in the theorem, with the correction terms vanishing as $\alpha \to 0$.

\end{proof}

\begin{theorem}
    Let $\mathcal{A}, \mathcal{B} \in \mathbb{R}^{n \times n \times p}$ are symmetric PSD. Then 
$$
\lambda_{\min }(\mathcal{B})[\operatorname{tr}(\mathcal{A})]^2 / n \leq \operatorname{tr}(\mathcal{A}* \mathcal{B}* \mathcal{A}) \leq[\operatorname{tr}(\mathcal{A})]^2 \lambda_{\max }(\mathcal{B}) .
$$
\end{theorem}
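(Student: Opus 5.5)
The plan is to push everything to the block circulant (equivalently, Fourier‑domain) representation, as in the proof of the previous trace inequalities. First I fix the trace convention used there:
\[
\operatorname{tr}(\mathcal{C})=\tfrac{1}{p}\operatorname{tr}(\operatorname{bcirc}(\mathcal{C}))=\tfrac{1}{p}\sum_{k=1}^p \operatorname{tr}(\widehat{\mathcal{C}}^{(k)}),
\]
which equals $\operatorname{tr}(C^{(1)})$. The identity holds because $\operatorname{bcirc}(\mathcal{C})$ is unitarily similar, via $F_p\otimes I_n$, to $\operatorname{diag}(\widehat{\mathcal{C}}^{(1)},\dots,\widehat{\mathcal{C}}^{(p)})$. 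Write $M=\operatorname{bcirc}(\mathcal{A})$ and $N=\operatorname{bcirc}(\mathcal{B})$; both are real symmetric PSD by Lemma \ref{bcircprop}(iii). By Lemma \ref{bcircprop}(ii), $\operatorname{bcirc}(\mathcal{A}*\mathcal{B}*\mathcal{A})=MNM$. By Remark \ref{rmrk1}, $\lambda_{\min}(\mathcal{B})$ and $\lambda_{\max}(\mathcal{B})$ are the extreme eigenvalues of $N$.

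\textbf{Step 1 (sandwich $N$).} By cyclicity, $\operatorname{tr}(MNM)=\operatorname{tr}(M^2N)=\operatorname{tr}(MNM)$ with $M^2\succeq 0$. The ordering $\lambda_{\min}(N)I\preceq N\preceq\lambda_{\max}(N)I$, conjugated by $M$, gives
\[
\lambda_{\min}(\mathcal{B})\operatorname{tr}(M^2)\le \operatorname{tr}(MNM)\le\lambda_{\max}(\mathcal{B})\operatorname{tr}(M^2).
\]
Dividing by $p$, this becomes $\lambda_{\min}(\mathcal{B})\operatorname{tr}(\mathcal{A}*\mathcal{A})\le\operatorname{tr}(\mathcal{A}*\mathcal{B}*\mathcal{A})\le\lambda_{\max}(\mathcal{B})\operatorname{tr}(\mathcal{A}*\mathcal{A})$.

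\textbf{Step 2 (compare $\operatorname{tr}(\mathcal{A}*\mathcal{A})$ with $[\operatorname{tr}(\mathcal{A})]^2$).} Let $\mu_1,\dots,\mu_{np}\ge 0$ be the eigenvalues of $M$, so that $\operatorname{tr}(\mathcal{A})=\frac1p\sum_j\mu_j$ and $\operatorname{tr}(\mathcal{A}*\mathcal{A})=\frac1p\sum_j\mu_j^2$. For the lower bound, Cauchy–Schwarz gives $\sum_j\mu_j^2\ge(\sum_j\mu_j)^2/(np)$. Hence
\[
\operatorname{tr}(\mathcal{A}*\mathcal{A})\ge \frac{p^2[\operatorname{tr}\mathcal{A}]^2}{np^2}=\frac{[\operatorname{tr}\mathcal{A}]^2}{n},
\]
and combining with Step 1 yields the left inequality exactly as stated.

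\textbf{Step 3 (upper bound; main obstacle).} For the right inequality I would use nonnegativity of the eigenvalues slice by slice. Setting $t_k=\operatorname{tr}(\widehat{\mathcal{A}}^{(k)})\ge0$, we have $\operatorname{tr}((\widehat{\mathcal{A}}^{(k)})^2)\le t_k^2$, and therefore
\[
\operatorname{tr}(\mathcal{A}*\mathcal{A})\le\frac1p\sum_k t_k^2\le\frac1p\Big(\sum_k t_k\Big)^2=p\,[\operatorname{tr}\mathcal{A}]^2 .
\]
This argument gives the bound $p\,[\operatorname{tr}(\mathcal{A})]^2\lambda_{\max}(\mathcal{B})$ with an extra factor $p$.

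\emph{This argument does not prove the stated right‑hand inequality for $p>1$.} It closes only if $p=1$, or if the trace is normalized as the unnormalized $\operatorname{tr}(\operatorname{bcirc}(\cdot))$. In the latter case the upper bound is immediate, but the lower bound then degrades to $[\operatorname{tr}\mathcal{A}]^2/(np)$.

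The loss is real, not an artifact of the estimates. Take $p=2$ and let $\mathcal{A}$ have slices $A^{(1)}=\operatorname{diag}(1,0,\dots)$ and $A^{(2)}=\operatorname{diag}(-1,0,\dots)$. Then $\widehat{\mathcal{A}}^{(1)}=0$ and $\widehat{\mathcal{A}}^{(2)}=\operatorname{diag}(2,0,\dots)$, so $\mathcal{A}$ is PSD with $\operatorname{tr}(\mathcal{A})=1$ and $\operatorname{tr}(\mathcal{A}*\mathcal{A})=2>1$.

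So the key point to settle is which trace normalization the theorem intends. I would fix $\operatorname{tr}=\operatorname{tr}(C^{(1)})$, which makes the left inequality hold as stated. For the right inequality, the correct constant under this convention is $p$: the sharp form is $\operatorname{tr}(\mathcal{A}*\mathcal{B}*\mathcal{A})\le p\,[\operatorname{tr}(\mathcal{A})]^2\lambda_{\max}(\mathcal{B})$, and this reduces to the stated bound exactly when $p=1$.
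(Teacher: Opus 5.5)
Your analysis is correct. It is also more careful than the paper's own proof, which does not establish the statement either.

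The paper gets both bounds from two claims:
\[
\operatorname{tr}(\mathcal{A}*\mathcal{B}*\mathcal{A})=\sum_{i=1}^n\lambda_i(\mathcal{A})^2\lambda_i(\mathcal{B}), \qquad \operatorname{tr}(\mathcal{A})=\sum_{i=1}^n\lambda_i(\mathcal{A}).
\]
It then applies Cauchy--Schwarz over $n$ terms for the lower bound, and tacitly uses $\sum_i\lambda_i^2\le(\sum_i\lambda_i)^2$ for the upper bound. The first claim is false in general, already for $p=1$: take $A=\operatorname{diag}(1,0)$, $B=\operatorname{diag}(0,1)$, where $\operatorname{tr}(ABA)=0$ but the sum is $1$. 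It holds only in special cases such as simultaneously diagonalizable pairs with co-ordered spectra. Both claims also pretend that a tensor in $\mathbb{R}^{n\times n\times p}$ has $n$ T-eigenvalues, rather than the $np$ eigenvalues of $\operatorname{bcirc}(\mathcal{A})$. In effect, the paper's argument is the matrix argument.

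Your Step 1 gives the correct replacement for the first claim, via the sandwich $\lambda_{\min}(N)M^2\preceq MNM\preceq\lambda_{\max}(N)M^2$ (with your $M=\operatorname{bcirc}(\mathcal{A})$, $N=\operatorname{bcirc}(\mathcal{B})$). Your Steps 2--3 are the correct versions of the remaining estimates once all $np$ eigenvalues are counted. For $p=1$ this is a complete proof. For $p\ge 2$ no proof exists, and your counterexample is valid.

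Two points in your framing need correcting.

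First, the normalization $\operatorname{tr}=\tfrac1p\operatorname{tr}\circ\operatorname{bcirc}$ is not the one the earlier proof uses. There the paper writes $\operatorname{tr}(\mathcal{A}*\mathcal{B})=\sum_{k=1}^p\operatorname{tr}(\widehat{\mathcal{A}}^{(k)}\widehat{\mathcal{B}}^{(k)})$ with no factor $1/p$, i.e.\ $\operatorname{tr}=\operatorname{tr}\circ\operatorname{bcirc}$. Its worked example agrees: it takes $\operatorname{tr}(\mathcal{B})=2\times 2=4$, and $\operatorname{tr}(\mathcal{A}*\mathcal{B})=10$ even though the first frontal slice of $\mathcal{A}*\mathcal{B}$ has trace $5$. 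Under that convention the upper bound is true, and the lower bound is actually false, not merely weakened by your method. For $\mathcal{A}=\mathcal{B}=\mathcal{I}$ one has $\operatorname{tr}(\mathcal{I}*\mathcal{I}*\mathcal{I})=np$, whereas $\lambda_{\min}(\mathcal{I})[\operatorname{tr}(\mathcal{I})]^2/n=np^2$.

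Second, it follows that the question is not which normalization is intended. Suppose $\operatorname{tr}=c\,\operatorname{tr}\circ\operatorname{bcirc}$ for some $c>0$. Your example with $\mathcal{B}=\mathcal{I}$ forces $c\ge 1$ for the right inequality, and $\mathcal{A}=\mathcal{B}=\mathcal{I}$ forces $c\le 1/p$ for the left one. So the statement fails for every $p\ge 2$ under every such normalization.

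Your two repaired versions are the same inequality written in different units:
\[
\frac{\lambda_{\min}(N)\,[\operatorname{tr}(M)]^2}{np}\;\le\;\operatorname{tr}(MNM)\;\le\;\lambda_{\max}(N)\,[\operatorname{tr}(M)]^2 .
\]
Both sides are attained: the identity gives equality on the left, and your rank-one example gives equality on the right. This confirms that your factor $p$ is sharp.
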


\begin{proof}
    Since $\mathcal{B}$ is symmetric PSD, its eigenvalues satisfy:
    \[
    \lambda_{\min}(\mathcal{B}) \leq \lambda_i(\mathcal{B}) \leq \lambda_{\max}(\mathcal{B}), \quad \forall i.
    \]
    The trace of a tensor $\mathcal{A}$ is the sum of its eigenvalues
    \[
    \operatorname{tr}(\mathcal{A}) = \sum_{i=1}^n \lambda_i(\mathcal{A}).
    \]
    
\noindent From trace property for T-product we have, $\operatorname{tr}(\mathcal{A}* \mathcal{B})=\operatorname{tr}(\mathcal{B}* \mathcal{A})$ , the trace of $\mathcal{A} * \mathcal{B} * \mathcal{A}$ can be expressed as
    \[
    \operatorname{tr}(\mathcal{A} * \mathcal{B} * \mathcal{A}) = \sum_{i=1}^n \lambda_i(\mathcal{A})^2 \lambda_i(\mathcal{B}),
    \]
    where $\lambda_i(\mathcal{A})$ and $\lambda_i(\mathcal{B})$ are the eigenvalues of $\mathcal{A}$ and $\mathcal{B}$, respectively. Since $\lambda_{\min}(\mathcal{B}) \leq \lambda_i(\mathcal{B}) \leq \lambda_{\max}(\mathcal{B})$, we have
    \[
    \lambda_{\min}(\mathcal{B}) \sum_{i=1}^n \lambda_i(\mathcal{A})^2 \leq \operatorname{tr}(\mathcal{A} * \mathcal{B} * \mathcal{A}) \leq \lambda_{\max}(\mathcal{B}) \sum_{i=1}^n \lambda_i(\mathcal{A})^2.
    \]

    \noindent By the Cauchy-Schwarz inequality for the eigenvalues of $\mathcal{A}$
    \[
    \left(\sum_{i=1}^n \lambda_i(\mathcal{A})\right)^2 \leq n \sum_{i=1}^n \lambda_i(\mathcal{A})^2.
    \]
    Thus
    \[
    \sum_{i=1}^n \lambda_i(\mathcal{A})^2 \geq \frac{[\operatorname{tr}(\mathcal{A})]^2}{n}.
    \]

\noindent Combining these results, we get
    \[
    \lambda_{\min}(\mathcal{B}) \frac{[\operatorname{tr}(\mathcal{A})]^2}{n} \leq \operatorname{tr}(\mathcal{A} * \mathcal{B} * \mathcal{A}) \leq \lambda_{\max}(\mathcal{B}) [\operatorname{tr}(\mathcal{A})]^2.
    \]
\end{proof}

\begin{theorem}
    For $\mathcal{A} \in \mathbb{R}^{n \times n \times p}$ fixed, if $\alpha$ and $\beta$ are any numbers satisfying
    \[
    \alpha \operatorname{tr}(\mathcal{B}) \leq \operatorname{tr}(\mathcal{A} * \mathcal{B}) \leq \beta \operatorname{tr}(\mathcal{B})
    \]
    for any positive semi-definite tensor $\mathcal{B}$, then $\alpha \leq \lambda_n(\bar{A})$ and $\lambda_1(\bar{A}) \leq \beta$, i.e., $\lambda_n(\bar{A})$ and $\lambda_1(\bar{A})$ are the tightest bounds for the inequality.
\end{theorem}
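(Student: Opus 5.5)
We interpret $\bar{A}$ as the symmetric part $\bar{A}=\tfrac12\bigl(\operatorname{bcirc}(\mathcal{A})+\operatorname{bcirc}(\mathcal{A})^t\bigr)=M_{\mathcal{A}}$ from Theorem \ref{eigenboundma}. Its eigenvalues are ordered $\lambda_1(\bar A)\ge\cdots\ge\lambda_n(\bar A)$, where $n$ now counts all $np$ eigenvalues of the $np\times np$ matrix. We take the trace of a tensor to be the trace of its block circulant representation, i.e. $\operatorname{tr}(\mathcal{C})=\operatorname{tr}(\operatorname{bcirc}(\mathcal{C}))$. With a different normalisation, such as the trace of the first frontal slice, the difference is only a factor $p$, and it cancels on both sides of the hypothesis.

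The plan is to test the hypothesis on rank-one PSD tensors built from eigenvectors of $\bar A$. First we reduce everything to matrices. By Lemma \ref{bcircprop}, $\operatorname{bcirc}(\mathcal{A}*\mathcal{B})=\operatorname{bcirc}(\mathcal{A})\operatorname{bcirc}(\mathcal{B})$, so $\operatorname{tr}(\mathcal{A}*\mathcal{B})=\operatorname{tr}(\operatorname{bcirc}(\mathcal{A})\operatorname{bcirc}(\mathcal{B}))$. When $\operatorname{bcirc}(\mathcal{B})$ is symmetric, the antisymmetric part of $\operatorname{bcirc}(\mathcal{A})$ contributes zero to this trace. Hence $\operatorname{tr}(\mathcal{A}*\mathcal{B})=\operatorname{tr}(\bar A\,\operatorname{bcirc}(\mathcal{B}))$.

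Next we pick the test tensors. Let $y$ be a unit eigenvector of $\bar A$ with eigenvalue $\lambda_n(\bar A)$. Form $\mathcal{X}=\operatorname{fold}(y)\in\mathbb{R}^{n\times1\times p}$ and set $\mathcal{B}=\mathcal{X}*\mathcal{X}^T$, which is symmetric PSD. Then $\operatorname{bcirc}(\mathcal{B})=\operatorname{bcirc}(\mathcal{X})\operatorname{bcirc}(\mathcal{X})^t$. We must compute $\operatorname{tr}(\mathcal{B})$ and $\operatorname{tr}(\bar A\operatorname{bcirc}(\mathcal{B}))$ and show that their ratio is at most $\lambda_n(\bar A)$. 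The hypothesis $\alpha\operatorname{tr}(\mathcal{B})\le\operatorname{tr}(\mathcal{A}*\mathcal{B})$ then gives $\alpha\le\lambda_n(\bar A)$, since $\operatorname{tr}(\mathcal{B})>0$. Symmetrically, the eigenvector for $\lambda_1(\bar A)$ together with the upper inequality gives $\lambda_1(\bar A)\le\beta$.

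The main obstacle is that $\operatorname{bcirc}(\mathcal{B})$ is not the rank-one projector $yy^t$. Instead, $\operatorname{bcirc}(\mathcal{X})\operatorname{bcirc}(\mathcal{X})^t=\sum_{j}(P^jy)(P^jy)^t$, where $P$ is the block cyclic shift and $P^jy$ are the columns of $\operatorname{bcirc}(\mathcal{X})$. Since $\bar A$ commutes with $P$ (it is itself block circulant), each $P^jy$ is again an eigenvector with the same eigenvalue. Therefore
\[
\operatorname{tr}(\bar A\operatorname{bcirc}(\mathcal{B}))=\sum_{j}(P^jy)^t\bar A(P^jy)=\lambda_n(\bar A)\sum_j\|P^jy\|^2=\lambda_n(\bar A)\operatorname{tr}(\operatorname{bcirc}(\mathcal{B})).
\]
This makes the ratio exactly $\lambda_n(\bar A)$, so the commutation with $P$ is the crucial step.

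If complex eigenvectors arise, work in the Fourier domain instead. Then $\bar A$ is unitarily block-diagonalised, and we take $\mathcal{B}$ to be supported on a single real pair of conjugate Fourier slices, chosen so that $\mathcal{B}$ stays real. The same Rayleigh-quotient computation applies, and the argument would be completed by confirming that this choice of $\mathcal{B}$ is real and PSD.
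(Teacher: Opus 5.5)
Your proof is correct, and its skeleton matches the paper's. Both plug into the hypothesis a PSD test tensor built from an eigenvector for the extreme eigenvalue of $\bar A$ (you use $\mathcal{X}*\mathcal{X}^T$, the paper $\mathcal{U}*\mathcal{U}^H$) and read off $\alpha\le\lambda_n(\bar A)$ and $\lambda_1(\bar A)\le\beta$.

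The execution differs in substance, though. The paper sets the argument in the Fourier domain and assumes in the proof that $\mathcal{A}$ is Hermitian, which the statement does not require. It then simply asserts $\operatorname{tr}(\mathcal{B})=1$ and $\operatorname{tr}(\mathcal{A}*\mathcal{B})=\lambda_n(\bar A)$ for $\mathcal{B}$ ``aligned with'' the eigenvector. You stay with $\operatorname{bcirc}$ and handle a non-symmetric $\mathcal{A}$ by noting that the antisymmetric part of $\operatorname{bcirc}(\mathcal{A})$ is trace-orthogonal to the symmetric matrix $\operatorname{bcirc}(\mathcal{B})$. You also actually carry out the trace computation. Since $\operatorname{bcirc}(\mathcal{B})=\sum_j(P^jy)(P^jy)^t$ is not a rank-one projector, the key fact is that the block-circulant $\bar A$ commutes with the block shift $P$. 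That is what makes the ratio equal the eigenvalue, and it is the step the paper's assertion silently relies on.

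Choosing $y$ as a real eigenvector of the real symmetric matrix $\bar A$ also makes $\mathcal{B}$ automatically real. So your last paragraph on complex eigenvectors is never needed, whereas a genuinely slice-wise Fourier construction would have to pair conjugate slices to keep $\mathcal{B}$ real.

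Finally, neither you nor the paper checks that $\lambda_n(\bar A)$ and $\lambda_1(\bar A)$ are themselves admissible, which the phrase ``tightest bounds'' requires. This follows quickly from your identity $\operatorname{tr}(\mathcal{A}*\mathcal{B})=\operatorname{tr}(\bar A\,\operatorname{bcirc}(\mathcal{B}))$. Write $\operatorname{bcirc}(\mathcal{B})=\sum_i\mu_iv_iv_i^t$ with $\mu_i\ge 0$ and bound each Rayleigh quotient $v_i^t\bar Av_i$ between the extreme eigenvalues.
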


\begin{proof}
    The given inequality is
    \[
    \alpha \operatorname{tr}(\mathcal{B}) \leq \operatorname{tr}(\mathcal{A} * \mathcal{B}) \leq \beta \operatorname{tr}(\mathcal{B}),
    \]
    where $\mathcal{B}$ is any positive semi-definite (PSD) tensor. Using the $t$-product and the trace operator's properties, we write
    \[
    \operatorname{tr}(\mathcal{A} * \mathcal{B}) = \operatorname{tr}(\mathcal{A} \cdot \mathcal{B}),
    \]
    where $\mathcal{A} \cdot \mathcal{B}$ is computed in the Fourier domain as
    \[
    \widehat{\mathcal{A} * \mathcal{B}}(:, :, k) = \widehat{\mathcal{A}}(:, :, k) \cdot \widehat{\mathcal{B}}(:, :, k).
    \]

   \noindent Since $\mathcal{A}$ is Hermitian, we consider its spectral decomposition
    \[
    \widehat{\mathcal{A}}(:, :, k) = Q_k \Lambda_k Q_k^H, \quad k = 1, \ldots, p,
    \]
    where $Q_k$ is unitary, and $\Lambda_k = \operatorname{diag}(\lambda_1^{(k)}, \ldots, \lambda_n^{(k)})$ contains the eigenvalues of the $k$-th frontal slice of $\widehat{\mathcal{A}}$. The eigenvalues of $\mathcal{A}$, denoted $\lambda_1(\mathcal{A}), \ldots, \lambda_n(\mathcal{A})$, are aggregated across all slices.

  \noindent For lower bound $\alpha \leq \lambda_n(\bar{A})$,
    let $\mathcal{B} = \mathcal{U}*\mathcal{U}^H$, where $\mathcal{U}$ is a unit tensor (i.e., $\|\mathcal{U}\| = 1$), and assume $\mathcal{B}$ is aligned with the eigenvector corresponding to the smallest eigenvalue $\lambda_n(\bar{A})$ of $\mathcal{A}$. Then
    \[
    \operatorname{tr}(\mathcal{B}) = 1, \quad \operatorname{tr}(\mathcal{A} * \mathcal{B}) = \lambda_n(\bar{A}).
    \]
    Substituting into the inequality
    \[
    \alpha \leq \lambda_n(\bar{A}).
    \]

  \noindent Now, for upper bound $\lambda_1(\bar{A}) \leq \beta$,  
    similarly, let $\mathcal{B} = \mathcal{U}*\mathcal{U}^H$, where $\mathcal{U}$ is aligned with the eigenvector corresponding to the largest eigenvalue $\lambda_1(\bar{A})$ of $\mathcal{A}$. Then
    \[
    \operatorname{tr}(\mathcal{B}) = 1, \quad \operatorname{tr}(\mathcal{A} * \mathcal{B}) = \lambda_1(\bar{A}).
    \]
    Substituting into the inequality
    \[
    \lambda_1(\bar{A}) \leq \beta.
    \]
 
   \noindent To verify that $\lambda_n(\bar{A})$ and $\lambda_1(\bar{A})$ are the tightest bounds, consider any $\alpha' > \lambda_n(\bar{A})$. For $\mathcal{B}$ aligned with the smallest eigenvalue:
    \[
    \operatorname{tr}(\mathcal{A} * \mathcal{B}) = \lambda_n(\bar{A}) < \alpha' \operatorname{tr}(\mathcal{B}),
    \]
    which violates the inequality.

    \noindent Similarly, for any $\beta' < \lambda_1(\bar{A})$, there exists $\mathcal{B}$ aligned with the largest eigenvalue such that:
    \[
    \operatorname{tr}(\mathcal{A} * \mathcal{B}) = \lambda_1(\bar{A}) > \beta' \operatorname{tr}(\mathcal{B}),
    \]
    \noindent which also violates the inequality.   Hence, $\alpha = \lambda_n(\bar{A})$ and $\beta = \lambda_1(\bar{A})$ are the tightest bounds.

    \noindent The tightest bounds for the inequality are given by the smallest and largest eigenvalues of $\mathcal{A}$, satisfying
    \[
    \lambda_n(\bar{A}) \leq \frac{\operatorname{tr}(\mathcal{A} * \mathcal{B})}{\operatorname{tr}(\mathcal{B})} \leq \lambda_1(\bar{A}).
    \]
\end{proof}

\begin{example}
  Let \(\mathcal{A}, \mathcal{B} \in \mathbb{R}^{2 \times 2 \times 2}\) be third-order tensors, defined as follows

\[
\mathcal{A}(:, :, 1) = \begin{bmatrix} 2 & 1 \\ 1 & 3 \end{bmatrix}, \quad  
\mathcal{A}(:, :, 2) = \begin{bmatrix} 1 & 0 \\ 0 & 2 \end{bmatrix}
\]  

\[
\mathcal{B}(:, :, 1) = \begin{bmatrix} 1 & 0 \\ 0 & 1 \end{bmatrix}, \quad  
\mathcal{B}(:, :, 2) = \begin{bmatrix} 0 & 1 \\ 1 & 0 \end{bmatrix}.
\]  
  
\end{example}

\noindent \textbf{Step 1: Eigenvalues of \(\bar{\mathcal{A}}\) (block circulant matrix of \(\mathcal{A}\)):}

\noindent Using the T-product, construct \(\operatorname{bcirc}(\mathcal{A})\)
\[
\operatorname{bcirc}(\mathcal{A}) = 
\begin{bmatrix}
\mathcal{A}(:, :, 1) & \mathcal{A}(:, :, 2) \\
\mathcal{A}(:, :, 2) & \mathcal{A}(:, :, 1)
\end{bmatrix} =
\begin{bmatrix}
2 & 1 & 1 & 0 \\
1 & 3 & 0 & 2 \\
1 & 0 & 2 & 1 \\
0 & 2 & 1 & 3
\end{bmatrix}.
\]

\noindent The eigenvalues of \(\operatorname{bcirc}(\mathcal{A})\) are approximately
\[
\lambda(\bar{\mathcal{A}}) = \{5.41, 2.58, 2, 0\}.
\]

\noindent Thus,
\[
\lambda_{\min}(\bar{\mathcal{A}}) = 0, \quad \lambda_{\max}(\bar{\mathcal{A}}) = 5.41.
\]

\noindent Note: here $\mathcal{A} = \bar{\mathcal{A}}$\\

\noindent \textbf{Step 2: Verify the bounds for \(\operatorname{tr}(\mathcal{A} * \mathcal{B})\)}

\begin{enumerate}
    \item \textbf{Compute \(\operatorname{tr}(\mathcal{B})\)}~ Using the definition of trace for tensors
    \[
    \operatorname{tr}(\mathcal{B}) = 2\times 2 =4.
    \]

    \item \textbf{Compute \(\operatorname{tr}(\mathcal{A} * \mathcal{B})\):}  Using the T-product as defined in Def \ref{tpdef}:

    \[
    \mathcal{A} * \mathcal{B} = \mathcal{C} 
\]
where $\mathcal{C}$ is defined by 
\[
\mathcal{C}(:, :, 1) = \begin{bmatrix} 2 & 2 \\ 3 & 3 \end{bmatrix}, \quad  
\mathcal{C}(:, :, 2) = \begin{bmatrix} 2 & 2 \\ 3 & 3 \end{bmatrix}
\] 
   So, \(\operatorname{tr}(\mathcal{A} * \mathcal{B}) = 10\)

    \item \textbf{Verify the inequality:} From the theorem above
    \[
    \lambda_{\min}(\bar{\mathcal{A}}) \operatorname{tr}(\mathcal{B}) \leq \operatorname{tr}(\mathcal{A} * \mathcal{B}) \leq \lambda_{\max}(\bar{\mathcal{A}}) \operatorname{tr}(\mathcal{B}).
    \]
    Substituting the values
    \[
    0 \times 4 \leq 8 \leq 5.41 \times 2.
    \]
    \[
    0 \leq 10 \leq 10.82.
    \]

    \noindent The inequality holds, confirming the theorem.
\end{enumerate}

\noindent This example demonstrates how the bounds \(\lambda_{\min}(\bar{\mathcal{A}})\) and \(\lambda_{\max}(\bar{\mathcal{A}})\) provide tight constraints for \(\operatorname{tr}(\mathcal{A} * \mathcal{B})\).

\noindent Next, we relax the condition on 
$\mathcal{B}$, considering it as a general symmetric tensor rather than a PSD tensor. The following theorem examines how this change affects the established bounds.

\begin{theorem}
    Let $\mathcal{A}, \mathcal{B} \in \mathbb{R}^{n \times n \times p}$, where $\mathcal{B}$ is symmetric under the T-product. Define the symmetrized tensor
    \[
    \bar{\mathcal{A}} = \frac{\mathcal{A} + \mathcal{A}^T}{2}.
    \]
    Then the following bounds hold
    \[
    \begin{aligned}
    \lambda_n(\bar{\mathcal{A}}) \operatorname{tr}(\mathcal{B}) - \lambda_n(\mathcal{B}) \big(n \lambda_n(\bar{\mathcal{A}}) - \operatorname{tr}(\mathcal{A})\big) &\leq \operatorname{tr}(\mathcal{A} * \mathcal{B}), \\
    \operatorname{tr}(\mathcal{A} * \mathcal{B}) &\leq \lambda_1(\bar{\mathcal{A}}) \operatorname{tr}(\mathcal{B}) - \lambda_n(\mathcal{B}) \big(n \lambda_1(\bar{\mathcal{A}}) - \operatorname{tr}(\mathcal{A})\big),
    \end{aligned}
    \]
    where $\lambda_1(\bar{\mathcal{A}})$ and $\lambda_n(\bar{\mathcal{A}})$ are the largest and smallest T-eigenvalues of $\bar{\mathcal{A}}$, respectively, and $\lambda_n(\mathcal{B})$ is the smallest T-eigenvalue of $\mathcal{B}$.
\end{theorem}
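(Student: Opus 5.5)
The plan is to reduce to the PSD case of the previous theorem by shifting $\mathcal{B}$ by a multiple of the identity tensor, exactly as in the matrix setting. Throughout I work with block circulant representations. By Lemma \ref{bcircprop}, $\operatorname{bcirc}(\mathcal{A}*\mathcal{B})=\operatorname{bcirc}(\mathcal{A})\operatorname{bcirc}(\mathcal{B})$ and $\operatorname{bcirc}(\mathcal{A}^T)=\operatorname{bcirc}(\mathcal{A})^T$. By Remark \ref{rmrk1}, the T-eigenvalues of $\bar{\mathcal{A}}$ and $\mathcal{B}$ are the eigenvalues of the real symmetric matrices $\operatorname{bcirc}(\bar{\mathcal{A}})=M_{\mathcal{A}}$ and $\operatorname{bcirc}(\mathcal{B})$, so all of them are real. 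Trace identities will be checked at the level of these matrices.

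\emph{Step 1: replace $\mathcal{A}$ by $\bar{\mathcal{A}}$.} Because $\operatorname{bcirc}(\mathcal{B})$ is symmetric,
\[
\operatorname{tr}\big(\operatorname{bcirc}(\mathcal{A})^T\operatorname{bcirc}(\mathcal{B})\big)=\operatorname{tr}\big(\operatorname{bcirc}(\mathcal{B})^T\operatorname{bcirc}(\mathcal{A})\big)=\operatorname{tr}\big(\operatorname{bcirc}(\mathcal{A})\operatorname{bcirc}(\mathcal{B})\big).
\]
Hence $\operatorname{tr}(\mathcal{A}^T*\mathcal{B})=\operatorname{tr}(\mathcal{A}*\mathcal{B})$, and therefore $\operatorname{tr}(\bar{\mathcal{A}}*\mathcal{B})=\operatorname{tr}(\mathcal{A}*\mathcal{B})$. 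By the same argument with $\mathcal{B}=\mathcal{I}$, we also get $\operatorname{tr}(\bar{\mathcal{A}})=\operatorname{tr}(\mathcal{A})$.

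\emph{Step 2: the PSD bound for the symmetric tensor $\bar{\mathcal{A}}$.} Next I establish, for every PSD symmetric tensor $\mathcal{C}$,
\[
\lambda_n(\bar{\mathcal{A}})\operatorname{tr}(\mathcal{C})\le \operatorname{tr}(\bar{\mathcal{A}}*\mathcal{C})\le \lambda_1(\bar{\mathcal{A}})\operatorname{tr}(\mathcal{C}).
\]
This is the content of the preceding theorem, but I would prove it directly. Write $\operatorname{bcirc}(\mathcal{C})=\sum_j c_j v_jv_j^*$ with $c_j\ge 0$ and unit vectors $v_j$. Then
\[
\operatorname{tr}\big(M_{\mathcal{A}}\operatorname{bcirc}(\mathcal{C})\big)=\sum_j c_j\, v_j^*M_{\mathcal{A}}v_j .
\]
Each Rayleigh quotient $v_j^*M_{\mathcal{A}}v_j$ lies in $[\mu_{\min}(M_{\mathcal{A}}),\mu_{\max}(M_{\mathcal{A}})]$, as in Theorem \ref{eigenboundma}. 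Summing gives the claim, since $\sum_j c_j=\operatorname{tr}(\mathcal{C})$.

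\emph{Step 3: the shift.} Set $\mathcal{C}=\mathcal{B}-\lambda_n(\mathcal{B})\mathcal{I}$. Its block circulant matrix $\operatorname{bcirc}(\mathcal{B})-\lambda_n(\mathcal{B})I$ is symmetric with nonnegative eigenvalues, so $\mathcal{C}$ is PSD. By linearity of bcirc and of the trace,
\[
\operatorname{tr}(\mathcal{C})=\operatorname{tr}(\mathcal{B})-\lambda_n(\mathcal{B})\operatorname{tr}(\mathcal{I}),\qquad \operatorname{tr}(\bar{\mathcal{A}}*\mathcal{C})=\operatorname{tr}(\mathcal{A}*\mathcal{B})-\lambda_n(\mathcal{B})\operatorname{tr}(\mathcal{A}),
\]
where Step 1 is used in the second identity. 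Substituting these into Step 2 and moving $\lambda_n(\mathcal{B})\operatorname{tr}(\mathcal{A})$ to the other side gives exactly the two stated inequalities, with $n=\operatorname{tr}(\mathcal{I})$.

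\emph{Main obstacle: the trace normalization.} The argument is routine once one fixes what ``$\operatorname{tr}$'' means for a tensor, and the constant $n$ in the statement must equal $\operatorname{tr}(\mathcal{I})$. If the trace is that of $\operatorname{bcirc}(\cdot)$, then $\operatorname{tr}(\mathcal{I})=np$. If it is normalized as in the earlier proof, namely $\frac1p\sum_k\operatorname{tr}(\widehat{\mathcal{A}}^{(k)})$, which equals the trace of the first frontal slice, then $\operatorname{tr}(\mathcal{I})=n$. I would adopt the latter convention uniformly, since the statement displays $n$. In that case Step 2 needs the identity $\operatorname{tr}(\mathcal{A}*\mathcal{C})=\frac1p\operatorname{tr}(\operatorname{bcirc}(\mathcal{A})\operatorname{bcirc}(\mathcal{C}))$. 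This holds because the diagonal blocks of a block circulant matrix are all equal to the first frontal slice, and the factor $1/p$ cancels on both sides of every inequality. I would state this convention explicitly before Step 1, since otherwise $n$ must be read as $np$.
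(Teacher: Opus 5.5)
Your proof is correct, and it takes a genuinely different route from the paper's. The paper asserts $\operatorname{tr}(\bar{\mathcal{A}}*\mathcal{B})=\sum_i\lambda_i(\bar{\mathcal{A}})\lambda_i(\mathcal{B})$, bounds it termwise between $\lambda_n(\bar{\mathcal{A}})\operatorname{tr}(\mathcal{B})$ and $\lambda_1(\bar{\mathcal{A}})\operatorname{tr}(\mathcal{B})$, and then says that substituting gives the stated bounds. That argument has three problems:
\begin{itemize}
\item the identity holds only when $\bar{\mathcal{A}}$ and $\mathcal{B}$ are simultaneously diagonalizable with matched ordering;
\item the termwise step needs $\lambda_i(\mathcal{B})\ge 0$, which is exactly the PSD hypothesis the theorem is meant to drop;
\item the correction term $-\lambda_n(\mathcal{B})\big(n\lambda(\bar{\mathcal{A}})-\operatorname{tr}(\mathcal{A})\big)$ is never derived.
\end{itemize}
Your shift $\mathcal{C}=\mathcal{B}-\lambda_n(\mathcal{B})\mathcal{I}$ is what actually produces that correction term. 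Your Rayleigh-quotient expansion of $\operatorname{tr}(M_{\mathcal{A}}\operatorname{bcirc}(\mathcal{C}))$ replaces the invalid eigenvalue-product identity with a valid PSD bound. So your argument proves the statement, whereas the paper's only sketches it.

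Your normalization remark is essential, not cosmetic, but you attribute the wrong convention to the paper. The earlier proof writes $\operatorname{tr}(\mathcal{A}*\mathcal{B})=\sum_k\operatorname{tr}(\widehat{\mathcal{A}}^{(k)}\widehat{\mathcal{B}}^{(k)})$ with no factor $1/p$. The worked example gets $\operatorname{tr}(\mathcal{B})=4$ for $n=p=2$ with $B^{(1)}=I_2$. Both are the trace of $\operatorname{bcirc}$, under which $\operatorname{tr}(\mathcal{I})=np$. Elsewhere (the $n\alpha^2$ term, the $[\operatorname{tr}(\mathcal{A})]^2/n$ bound) the paper behaves as if $\operatorname{tr}(\mathcal{I})=n$, so it is not consistent.

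Under the $\operatorname{bcirc}$ convention the statement as written fails for $p\ge 2$. With $\mathcal{A}=\mathcal{B}=\mathcal{I}$, the claimed lower bound is $2np-n>np=\operatorname{tr}(\mathcal{A}*\mathcal{B})$. So you should impose $\operatorname{tr}(\mathcal{A}):=\operatorname{tr}(A^{(1)})$ as an explicit convention of your own rather than as something inherited from the earlier proof. Alternatively, keep the $\operatorname{bcirc}$ trace and replace $n$ by $np$ in the statement.
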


\begin{proof}
    The symmetrized tensor is defined as
    \[
    \bar{\mathcal{A}} = \frac{\mathcal{A} + \mathcal{A}^T}{2},
    \]
    where $\mathcal{A}^T$ denotes the transpose under the T-product. This ensures that $\bar{\mathcal{A}}$ is symmetric under the T-product
    \[
    \bar{\mathcal{A}}^T = \bar{\mathcal{A}}.
    \]
    Since $\bar{\mathcal{A}}$ is symmetric, its T-eigenvalues $\lambda_1(\bar{\mathcal{A}}), \dots, \lambda_n(\bar{\mathcal{A}})$ are real and ordered as
    \[
    \lambda_1(\bar{\mathcal{A}}) \geq \lambda_2(\bar{\mathcal{A}}) \geq \cdots \geq \lambda_n(\bar{\mathcal{A}}).
    \]

    \noindent The trace of the T-product $\operatorname{tr}(\mathcal{A} * \mathcal{B})$ is defined as
    \[
    \operatorname{tr}(\mathcal{A} * \mathcal{B}) = \sum_{i=1}^n \sum_{j=1}^n \mathcal{A}(i, j, :) * \mathcal{B}(j, i, :).
    \]
    Using the cyclic invariance of the T-product, we have
    \[
    \operatorname{tr}(\mathcal{A} * \mathcal{B}) = \operatorname{tr}(\bar{\mathcal{A}} * \mathcal{B}).
    \]

   \noindent The trace of $\bar{\mathcal{A}} * \mathcal{B}$ can be expressed as
    \[
    \operatorname{tr}(\bar{\mathcal{A}} * \mathcal{B}) = \sum_{i=1}^n \lambda_i(\bar{\mathcal{A}}) \lambda_i(\mathcal{B}),
    \]
    where $\lambda_i(\bar{\mathcal{A}})$ and $\lambda_i(\mathcal{B})$ are the T-eigenvalues of $\bar{\mathcal{A}}$ and $\mathcal{B}$, respectively.

    \noindent Using the extreme eigenvalues, we bound $\operatorname{tr}(\bar{\mathcal{A}} * \mathcal{B})$:
    \[
    \lambda_n(\bar{\mathcal{A}}) \sum_{i=1}^n \lambda_i(\mathcal{B}) \leq \operatorname{tr}(\bar{\mathcal{A}} * \mathcal{B}) \leq \lambda_1(\bar{\mathcal{A}}) \sum_{i=1}^n \lambda_i(\mathcal{B}).
    \]

 \noindent   From the symmetrization, $\operatorname{tr}(\bar{\mathcal{A}}) = \operatorname{tr}(\mathcal{A})$. Substituting the eigenvalue bounds
    \[
    \lambda_n(\bar{\mathcal{A}}) \operatorname{tr}(\mathcal{B}) - \lambda_n(\mathcal{B}) \big(n \lambda_n(\bar{\mathcal{A}}) - \operatorname{tr}(\mathcal{A})\big) \leq \operatorname{tr}(\mathcal{A} * \mathcal{B}),
    \]
    and
    \[
    \operatorname{tr}(\mathcal{A} * \mathcal{B}) \leq \lambda_1(\bar{\mathcal{A}}) \operatorname{tr}(\mathcal{B}) - \lambda_n(\mathcal{B}) \big(n \lambda_1(\bar{\mathcal{A}}) - \operatorname{tr}(\mathcal{A})\big).
    \]
    This completes the proof.
\end{proof}

\subsection{Ky Fan's theorem for T-product tensors}

Ky Fan’s theorem is a fundamental result in matrix analysis that provides an upper bound on the sum of the largest eigenvalues or singular values of a matrix. We extend this theorem to third-order tensors using the T-product framework, which is based on the discrete Fourier transform (DFT) and block-circulant representations.

\begin{theorem}
    Let $\mathcal{H} \in \mathbb{C}^{n \times n \times p}$ be a Hermitian tensor (i.e., $\mathcal{H} = \mathcal{H}^H$) with eigenvalues $\lambda_1(\mathcal{H}) \geq \lambda_2(\mathcal{H}) \geq \cdots \geq \lambda_n(\mathcal{H})$. Then:
\begin{align*}
    \max_{\mathcal{U}*\mathcal{U}^* = \mathcal{I}_k} \operatorname{tr}(\mathcal{U} * \mathcal{H} * \mathcal{U}^H) &= \sum_{i=1}^k \lambda_i(\mathcal{H}), \\
    \min_{\mathcal{U}*\mathcal{U}^* = \mathcal{I}_k} \operatorname{tr}(\mathcal{U} * \mathcal{H} * \mathcal{U}^H) &= \sum_{i=1}^k \lambda_{n-i+1}(\mathcal{H}),
\end{align*}
where $\mathcal{U} \in \mathbb{C}^{k \times n \times p}$ is a $k \times n \times p$ tensor satisfying $\mathcal{U} * \mathcal{U}^H = \mathcal{I}_k$.
\end{theorem}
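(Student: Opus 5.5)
The plan is to pass to the Fourier domain, as in the proof of the von Neumann--type trace inequality above, and reduce the statement to $p$ independent copies of the classical matrix Ky Fan theorem. Let $F_p$ be the unitary DFT matrix. Then $(F_p\otimes I_n)\operatorname{bcirc}(\mathcal{H})(F_p^*\otimes I_n)=\operatorname{blockdiag}(\widehat{H}^{(1)},\dots,\widehat{H}^{(p)})$, and similarly for $\mathcal{U}$ with $k\times n$ blocks $\widehat{U}^{(j)}$.

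By Lemma \ref{bcircprop} (ii) and (iii), $\operatorname{bcirc}(\mathcal{U}*\mathcal{H}*\mathcal{U}^H)=\operatorname{bcirc}(\mathcal{U})\operatorname{bcirc}(\mathcal{H})\operatorname{bcirc}(\mathcal{U})^*$. Conjugating by the DFT gives the slices $\widehat{U}^{(j)}\widehat{H}^{(j)}\widehat{U}^{(j)*}$. In the same way, the constraint $\mathcal{U}*\mathcal{U}^H=\mathcal{I}_k$ becomes $\widehat{U}^{(j)}\widehat{U}^{(j)*}=I_k$ for every $j$, because $\operatorname{bcirc}(\mathcal{I}_k)=I_{kp}$ is block diagonalized to identity blocks. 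Using the convention of the earlier proof that the trace of a T-product is the sum of the traces of its Fourier slices, we get
\[
\operatorname{tr}(\mathcal{U}*\mathcal{H}*\mathcal{U}^H)=\sum_{j=1}^p \operatorname{tr}\big(\widehat{U}^{(j)}\widehat{H}^{(j)}\widehat{U}^{(j)*}\big).
\]
The key structural point is that the admissible $\mathcal{U}$ correspond exactly to arbitrary tuples $(\widehat{U}^{(1)},\dots,\widehat{U}^{(p)})$ of $k\times n$ matrices with orthonormal rows. This holds because the inverse DFT of any such tuple is a (complex) tensor, which is allowed since $\mathcal{H}\in\mathbb{C}^{n\times n\times p}$. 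Hence the optimization decouples slice by slice.

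Each $\widehat{H}^{(j)}$ is Hermitian, since $\mathcal{H}=\mathcal{H}^H$ and bcirc commutes with $^*$. The matrix Ky Fan theorem then gives $\max \operatorname{tr}(\widehat{U}^{(j)}\widehat{H}^{(j)}\widehat{U}^{(j)*})=\sum_{i=1}^k\lambda_i(\widehat{H}^{(j)})$. The maximum is attained when the rows of $\widehat{U}^{(j)}$ are the conjugated top-$k$ eigenvectors of $\widehat{H}^{(j)}$. The minimum is $\sum_{i=1}^k\lambda_{n-i+1}(\widehat{H}^{(j)})$, attained at the bottom-$k$ eigenvectors. For the upper bound, the matrix result is applied slice by slice and summed. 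For attainment, we take the inverse DFT of the optimal tuple to obtain an admissible $\mathcal{U}$ achieving equality in every slice simultaneously. The minimum follows by the same argument, or by applying the maximum statement to $-\mathcal{H}$.

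The main obstacle is the meaning of $\lambda_i(\mathcal{H})$ in the statement. It cannot be the $i$-th largest eigenvalue of the $np\times np$ matrix $\operatorname{bcirc}(\mathcal{H})$ with only $k$ terms summed. Applying Ky Fan globally to $\operatorname{bcirc}(\mathcal{U})$, a $kp\times np$ matrix with orthonormal rows, gives only the upper bound ``sum of the top $kp$ eigenvalues of $\operatorname{bcirc}(\mathcal{H})$''. That bound is generally not attained, because the block-circulant structure forces exactly $k$ eigenvectors to be taken from each Fourier slice. I would therefore fix the convention implicit in the earlier proofs, where eigenvalues are ``aggregated across all slices'': $\lambda_i(\mathcal{H}):=\sum_{j=1}^p\lambda_i(\widehat{H}^{(j)})$, with each slice spectrum ordered non-increasingly. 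Under this convention the slice-wise identity sums exactly to the claimed formula, including the $\min$ version, because $\lambda_{n-i+1}$ aggregates the same way. The trace convention (sum over Fourier slices versus $\operatorname{tr}$ of the first frontal slice, which differs by the factor $p$) must be matched consistently on both sides, and I would state this explicitly.
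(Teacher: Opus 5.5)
Your proof is correct, and it takes a different route from the paper. The paper stays at the tensor level. It writes $\mathcal{H}=\mathcal{Q}*\mathcal{L}*\mathcal{Q}^H$ with $\mathcal{L}$ f-diagonal and substitutes $\mathcal{V}=\mathcal{U}*\mathcal{Q}$, which is again row-orthonormal. It then simply asserts that $\operatorname{tr}(\mathcal{V}*\mathcal{L}*\mathcal{V}^H)$ is maximized (minimized) by aligning $\mathcal{V}$ with the top (bottom) $k$ diagonal entries of $\mathcal{L}$.

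That mirrors the matrix proof and is shorter, but the upper bound is never actually argued. It is not automatic here, because the diagonal entries of $\mathcal{L}$ are tubes rather than scalars. ``Top $k$'' only makes sense once each Fourier slice is sorted, and the inequality actually needed is the slice-wise one, $\sum_j\sum_i\lambda_i(\widehat H^{(j)})\,\|\widehat V^{(j)}(:,i)\|^2\le\sum_j\sum_{i\le k}\lambda_i(\widehat H^{(j)})$.

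Your Fourier decoupling supplies exactly this missing step by applying the matrix Ky Fan theorem to each $\widehat H^{(j)}$. Your observation that admissible $\mathcal{U}$ correspond to arbitrary tuples of row-orthonormal $\widehat U^{(j)}$ (legitimate because complex $\mathcal{U}$ are allowed) then gives attainment cleanly.

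Your convention $\lambda_i(\mathcal{H})=\sum_j\lambda_i(\widehat H^{(j)})$ is also what the paper's proof implicitly requires, since it is the Fourier-slice trace of the $i$-th diagonal tube of $\mathcal{L}$. Your warning about the literal reading is justified as well. For $k=n$ every admissible $\mathcal{U}$ is unitary, so with the Fourier-slice trace the objective always equals $\operatorname{tr}(\mathcal{H})$. That is the sum of all $np$ eigenvalues of $\operatorname{bcirc}(\mathcal{H})$, and in general not the sum of its $n$ largest.
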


\begin{proof}
    
Using the spectral decomposition of the tensor $\mathcal{H}$, we have:
\begin{equation*}
    \mathcal{H} = \mathcal{Q} * \mathcal{L} * \mathcal{Q}^H,
\end{equation*}
where $\mathcal{Q} \in \mathbb{C}^{n \times n \times p}$ is unitary ($\mathcal{Q} * \mathcal{Q}^H = \mathcal{I}_n$), $\mathcal{L}$ is a diagonal tensor containing the eigenvalues $\lambda_1(\mathcal{H}), \lambda_2(\mathcal{H}), \dots, \lambda_n(\mathcal{H})$.

\noindent Let $\mathcal{V} = \mathcal{U} * \mathcal{Q}$, where $\mathcal{V}$ is a $k \times n \times p$ tensor with orthonormal rows ($\mathcal{V} * \mathcal{V}^H = \mathcal{I}_k$). Then:
\begin{equation*}
    \operatorname{tr}(\mathcal{U} * \mathcal{H} * \mathcal{U}^H) = \operatorname{tr}(\mathcal{V} * \mathcal{L} * \mathcal{V}^H).
\end{equation*}


Since $\mathcal{L}$ is diagonal, $\mathcal{V} * \mathcal{L} * V^H$ projects onto the subspace spanned by $k$ eigenvalues of $\mathcal{H}$. To maximize the trace, choose $\mathcal{V}$ to correspond to the top $k$ eigenvalues $\lambda_1, \dots, \lambda_k$. Thus:
\begin{equation*}
    \max_{\mathcal{U}\mathcal{U}^* = \mathcal{I}_k} \operatorname{tr}(\mathcal{U} * \mathcal{H} * \mathcal{U}^H) = \sum_{i=1}^k \lambda_i(\mathcal{H}).
\end{equation*}


To minimize the trace, choose $\mathcal{V}$ to correspond to the bottom $k$ eigenvalues $\lambda_{n-k+1}, \dots, \lambda_n$. Thus:
\begin{equation*}
    \min_{\mathcal{U}*\mathcal{U}^* = \mathcal{I}_k} \operatorname{tr}(\mathcal{U} * \mathcal{H} * \mathcal{U}^H) = \sum_{i=1}^k \lambda_{n-i+1}(\mathcal{H}).
\end{equation*}

\textbf{Conclusion}

1. The maximum trace corresponds to the top $k$ eigenvalues:
\begin{equation*}
    \max_{\mathcal{U}*\mathcal{U}^* = \mathcal{I}_k} \operatorname{tr}(\mathcal{U} * \mathcal{H} * \mathcal{U}^H) = \sum_{i=1}^k \lambda_i(\mathcal{H}).
\end{equation*}

2. The minimum trace corresponds to the bottom $k$ eigenvalues:
\begin{equation*}
    \min_{\mathcal{U}*\mathcal{U}^* = \mathcal{I}_k} \operatorname{tr}(\mathcal{U} * \mathcal{H} * \mathcal{U}^H) = \sum_{i=1}^k \lambda_{n-i+1}(\mathcal{H}).
\end{equation*}

\end{proof}

\noindent This result ensures that the sum of the largest tensor eigenvalues is maximized over all possible spectral projections.

\section{Distance and geometry induced by trace between tensors}\label{distance}

The trace function provides a natural foundation for defining similarity and distance
measures between tensors under the $T$-product framework. By aggregating spectral
information across frontal slices, trace-based quantities enable the construction of
computationally efficient metrics that capture both algebraic and geometric properties
of tensors. A basic example is the trace-induced Frobenius distance
$d_{\mathrm{Fro}}(\mathcal{A},\mathcal{B})=\|\mathcal{A}-\mathcal{B}\|_F$, where
$\|\mathcal{A}\|_F=\sqrt{\mathrm{tr}(\mathcal{A}^T * \mathcal{A})}$. While this metric
quantifies the overall energy difference between tensors, it does not fully reflect
their intrinsic covariance structure.

Beyond norm-based distances, the trace function induces a richer geometric structure
on the cone of positive semi-definite tensors. In particular, it leads to a
Riemannian-type geometry in which geodesic paths provide smooth and physically
meaningful interpolations between tensors. The geodesic connecting two PSD tensors
$\mathcal{A}$ and $\mathcal{B}$ is given by
\[
\mathcal{G}(t)
= \mathcal{A}^{1/2} *
(\mathcal{A}^{-1/2} * \mathcal{B} * \mathcal{A}^{-1/2})^{t}
* \mathcal{A}^{1/2}, \quad t \in [0,1].
\]
As illustrated numerically in Section~4 (see Fig.~\ref{fig:geodesic_trace}),
this path exhibits smooth trace evolution, reflecting stable deformation of tensor
covariance structures.

This geometric viewpoint naturally connects tensor distances with optimal transport.
When tensors represent multi-modal covariance data, transport-based metrics quantify
the minimal cost required to deform one tensor into another while preserving
positive semi-definiteness. In this spirit, we extend the classical Bures--Wasserstein
distance to tensor spaces by defining the trace-based metric
\[
d(\mathcal{A}, \mathcal{B})
=
\left(
\mathrm{tr}(\mathcal{A}) + \mathrm{tr}(\mathcal{B})
- 2\,\mathrm{tr}\!\left(
(\mathcal{A}^{1/2} * \mathcal{B} * \mathcal{A}^{1/2})^{1/2}
\right)
\right)^{1/2}.
\]
Numerical experiments (see Fig.~\ref{fig:eig_wasserstein}) demonstrate that this
distance captures spectral sensitivity: variations in the Wasserstein distance are
accompanied by controlled changes in tensor eigenvalues.

For completeness, we also recall the trace log-Euclidean distance
$d_{\mathrm{LE}}(\mathcal{A},\mathcal{B})=\|\log(\mathcal{A})-\log(\mathcal{B})\|_F$,
which provides an alternative smooth interpolation on the tensor PSD manifold.
Together, these trace-based distances offer complementary geometric and spectral
perspectives on tensor-valued data.

\subsection{Existence of $\mathcal{A}^{1/2}$ in tensor T-product}

$\mathcal{S}^{1 / 2}$ \textbf{for tensors} \cite{ahalf}:~Given a third-order tensor $\mathcal{A} \in \mathbb{C}^{n \times n \times p}$, we can perform a T-SVD (tensor singular value decomposition) as
\begin{equation*}
    \mathcal{A}=\mathcal{U} * \mathcal{S} * \mathcal{V}^*,
\end{equation*}

\noindent where $\mathcal{U} \in \mathbb{C}^{n \times n \times p}$ and $\mathcal{V} \in \mathbb{C}^{n \times n \times p}$ are orthogonal tensors, $\mathcal{S} \in \mathbb{C}^{n \times n \times p}$ is a f-diagonal tensor whose frontal slices contain the singular values of the tensor $\mathcal{A}$.\\

\noindent The square root of $\mathcal{S}$, denoted $\mathcal{S}^{1 / 2}$, is the f-diagonal tensor where each diagonal entry is the square root of the corresponding entry in $\mathcal{S}$. This can be written as:

$$
\mathcal{S}^{1 / 2}=\operatorname{diag}\left(\sqrt{\sigma_1}, \sqrt{\sigma_2}, \ldots, \sqrt{\sigma_n}\right)
$$

\noindent where $\sigma_1, \sigma_2, \ldots, \sigma_n$ are the singular values of $\mathcal{A}$.
Thus, $\mathcal{S}^{1 / 2}$ can be used in a factorization like $\mathcal{M} * \mathcal{M}^*$ to represent a positive semi-definite tensor $\mathcal{A}$, where $\mathcal{M}=\mathcal{U} * \mathcal{S}^{1 / 2}$

\begin{theorem}
    A third-order tensor \( \mathcal{A} \in \mathbb{C}^{n \times n \times p} \) is positive semi-definite (PSD) if and only if there exists a tensor \( \mathcal{M} \in \mathbb{C}^{n \times n \times p} \) such that:
\[
\mathcal{A} = \mathcal{M} * \mathcal{M}^*,
\]
where \( \mathcal{M}^* \) is the conjugate transpose.
\end{theorem}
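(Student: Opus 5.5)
The plan is to reduce everything to matrices through $\operatorname{bcirc}$, using Lemma~\ref{bcircprop}. We take ``$\mathcal{A}$ is PSD'' to mean that $\mathcal{A}$ is Hermitian and $\operatorname{bcirc}(\mathcal{A})$ is a positive semidefinite matrix. This is consistent with Remark~\ref{rmrk1}: the T-eigenvalues of $\mathcal{A}$ are exactly the eigenvalues of $\operatorname{bcirc}(\mathcal{A})$, so this is the same as saying $\mathcal{A}=\mathcal{A}^*$ with all T-eigenvalues nonnegative. Equivalently, $\mathcal{X}^* * \mathcal{A} * \mathcal{X}$ has nonnegative $(1,1,1)$ entry for every $\mathcal{X}\in\mathbb{C}^{n\times 1\times p}$. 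Both directions will then follow from the matrix fact that $\operatorname{bcirc}(\mathcal{A})\succeq 0$ iff it is a Gram matrix, together with the fact that $\operatorname{bcirc}$ is an injective algebra homomorphism that respects $*$.

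\emph{Sufficiency.} Suppose $\mathcal{A}=\mathcal{M}*\mathcal{M}^*$. Lemma~\ref{bcircprop}(ii),(iii) gives
\[
\operatorname{bcirc}(\mathcal{A})=\operatorname{bcirc}(\mathcal{M})\operatorname{bcirc}(\mathcal{M})^*.
\]
This matrix is Hermitian, and for any $y\in\mathbb{C}^{np}$ we have $y^*\operatorname{bcirc}(\mathcal{A})y=\|\operatorname{bcirc}(\mathcal{M})^*y\|^2\ge 0$. Hence $\mathcal{A}=\mathcal{A}^*$, and every T-eigenvalue of $\mathcal{A}$ is nonnegative, so $\mathcal{A}$ is PSD.

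\emph{Necessity.} This is the substantive direction, and the obstacle is to produce a \emph{tensor} factor $\mathcal{M}$, not merely a matrix factor. The matrix square root of $\operatorname{bcirc}(\mathcal{A})$ must itself be block circulant. I would handle this in the Fourier domain.
\begin{itemize}
\item Write $\operatorname{bcirc}(\mathcal{A})=(F_p^*\otimes I_n)\,\operatorname{diag}(\widehat{A}^{(1)},\dots,\widehat{A}^{(p)})\,(F_p\otimes I_n)$, with $\widehat{\mathcal{A}}=\text{fft}(\mathcal{A},[\,],3)$ as in the proof of the von Neumann-type theorem above.
\item Since $\operatorname{bcirc}(\mathcal{A})\succeq 0$ and the conjugation is unitary, each slice $\widehat{A}^{(k)}$ is Hermitian PSD. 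Take $\widehat{A}^{(k)}=Q_k\Lambda_kQ_k^*$ and set $\widehat{M}^{(k)}=Q_k\Lambda_k^{1/2}Q_k^*$, the unique PSD square root.
\item Define $\mathcal{M}$ by the inverse FFT along the third mode. Any block-diagonal matrix conjugated back by $F_p\otimes I_n$ is block circulant, so $\mathcal{M}$ is a genuine tensor with $\operatorname{bcirc}(\mathcal{M})^2=\operatorname{bcirc}(\mathcal{A})$ and $\operatorname{bcirc}(\mathcal{M})^*=\operatorname{bcirc}(\mathcal{M})$.
\item By injectivity of $\operatorname{bcirc}$ and Lemma~\ref{bcircprop}, this gives $\mathcal{M}^*=\mathcal{M}$ and $\mathcal{M}*\mathcal{M}^*=\mathcal{A}$.
\end{itemize}
This is exactly $\mathcal{M}=\mathcal{A}^{1/2}$. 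Alternatively, one can follow the paper's own construction through the T-SVD $\mathcal{A}=\mathcal{U}*\mathcal{S}*\mathcal{V}^*$. In the Hermitian PSD case one may take $\mathcal{V}=\mathcal{U}$, and then set $\mathcal{M}=\mathcal{U}*\mathcal{S}^{1/2}$.

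\emph{Real case.} If $\mathcal{A}$ is real, one should check that $\mathcal{M}$ can be taken real. The slices satisfy $\widehat{A}^{(p-k+2)}=\overline{\widehat{A}^{(k)}}$, and the unique PSD square root commutes with conjugation. So the square-root slices inherit the same conjugate symmetry, and the inverse FFT returns a real tensor. I expect this symmetry check, and the justification that the slicewise square root is independent of the choice of eigenbasis (settled by uniqueness of the PSD root), to be the only delicate points.
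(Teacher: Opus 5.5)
Your proof is correct. The sufficiency direction is the paper's argument in matrix form. The paper's identity $\langle \mathcal{X}, \mathcal{M}*(\mathcal{M}^* * \mathcal{X})\rangle = \|\mathcal{M}^* * \mathcal{X}\|_F^2$ is your $y^*\operatorname{bcirc}(\mathcal{A})y = \|\operatorname{bcirc}(\mathcal{M})^*y\|^2$ written in tensor language.

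For necessity the paper stays at the tensor level. It asserts a T-SVD $\mathcal{A}=\mathcal{U}*\mathcal{S}*\mathcal{U}^*$ with $\mathcal{S}$ f-diagonal and nonnegative, and takes the (generally non-Hermitian) factor $\mathcal{M}=\mathcal{U}*\mathcal{S}^{1/2}$. You instead work slice by slice in the Fourier domain and produce the Hermitian factor $\mathcal{M}=\mathcal{A}^{1/2}$. The mechanism is the same: diagonalize, then take square roots of nonnegative eigenvalues. Your version, however, proves what the paper takes for granted:
\begin{itemize}
\item positivity of $\operatorname{bcirc}(\mathcal{A})$ passes to each Fourier slice;
\item one may take $\mathcal{V}=\mathcal{U}$;
\item the square-root factor is block circulant, hence a genuine tensor.
\end{itemize}
It also yields uniqueness and, in the real case, a real factor.

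One caution about your closing alternative. The paper defines $\mathcal{S}^{1/2}$ by entrywise square roots of the entries of $\mathcal{S}$. But the T-product multiplies the diagonal tubes of f-diagonal tensors by circular convolution. So for $p=2$, a tube $(s_1,s_2)$ of that $\mathcal{S}^{1/2}$ squares to $(s_1+s_2,\,2\sqrt{s_1s_2})$ rather than $(s_1,s_2)$. Moreover, the spatial-domain entries of $\mathcal{S}$ can be negative: for $n=1$, $p=2$, the Fourier values $(0,1)$ give the PSD tube $(1/2,-1/2)$. The construction $\mathcal{M}=\mathcal{U}*\mathcal{S}^{1/2}$ is therefore valid only when $\mathcal{S}^{1/2}$ is formed as in your main argument: take square roots of the Fourier-domain diagonal entries, then transform back.
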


\noindent\textbf{Proof:} \( (\Rightarrow) \) If \( \mathcal{A} = \mathcal{M} * \mathcal{M}^* \), then \( \mathcal{A} \) is PSD. Let \( \mathcal{A} = \mathcal{M} * \mathcal{M}^* \), where \( \mathcal{M} \in \mathbb{C}^{n \times n \times p} \). To show that \( \mathcal{A} \) is PSD, we need to check that for any tensor \( \mathcal{X} \in \mathbb{C}^{n \times 1 \times p} \)
    \[
    \langle \mathcal{X}, \mathcal{A} * \mathcal{X} \rangle \geq 0.
    \]
    Substituting \( \mathcal{A} = \mathcal{M} * \mathcal{M}^* \):
    \[
    \langle \mathcal{X},(\mathcal{M} * \mathcal{M}^*) * \mathcal{X} \rangle = \langle \mathcal{X}, (\mathcal{M} * \mathcal{M}^*) * \mathcal{X} \rangle.
    \]
    By properties of the T-product
    \[
    \langle \mathcal{X}, \mathcal{M} * (\mathcal{M}^* * \mathcal{X}) \rangle = \langle \mathcal{M}^* * \mathcal{X}, \mathcal{M}^* * \mathcal{X} \rangle.
    \]
    Let \( \mathcal{Y} = \mathcal{M}^* * \mathcal{X} \). Then
    \[
    \langle \mathcal{X}, \mathcal{A} * \mathcal{X} \rangle = \langle \mathcal{Y}, \mathcal{Y} \rangle = \| \mathcal{Y} \|_F^2 \geq 0.
    \]
    Since \( \| \mathcal{Y} \|^2 \geq 0 \), we conclude that \( \mathcal{A} \) is PSD.

    \noindent \textbf{\( (\Leftarrow) \) If \( \mathcal{A} \) is PSD, then there exists \( \mathcal{M} \) such that \( \mathcal{A} = \mathcal{M} * \mathcal{M}^* \)}

  \noindent  Since \( \mathcal{A} \) is PSD, we can perform a T-SVD on \( \mathcal{A} \). Specifically, we write
    \[
    \mathcal{A} = \mathcal{U} * \mathcal{S} * \mathcal{U}^*,
    \]
    where \( \mathcal{U} \) is orthogonal, and \( \mathcal{S} \) is a f-diagonal tensor with non-negative entries. 

   \noindent Define \( \mathcal{S}^{1/2} \) as the f-diagonal tensor of square roots of the singular values. Then
    \[
    \mathcal{M} = \mathcal{U} * \mathcal{S}^{1/2}.
    \]
    Therefore
    \[
    \mathcal{M} * \mathcal{M}^* = \mathcal{A}.
    \] \hfill \qed

    \subsection{Tensor T-function trace results}
Let $\mathbb{T}_{psd}(\mathbf{R})$ denote the set of all positive definite tensors equipped with T-product.

\begin{theorem}
    The map $f(\mathcal{X})=\operatorname{tr} \mathcal{X}^{1 / 2}$ from $\mathbb{T}_{psd}(\mathbf{R})$ into $(0, \infty)$ is strictly concave i.e., if $\mathcal{X}$ and $\mathcal{Y}$ are two distinct elements of $\mathbb{T}_{psd}(\mathbf{R})$ and $a, b$ are positive numbers with $a+b=1$, then

$$
f(a \mathcal{X}+b \mathcal{Y})>a f(\mathcal{X})+b f(\mathcal{Y})
$$
\end{theorem}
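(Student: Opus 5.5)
The plan is to reduce the statement to the classical strict concavity of $X\mapsto \operatorname{tr}X^{1/2}$ on positive definite matrices. The reduction goes through the block-circulant (equivalently Fourier-domain) representation, following the pattern of the earlier trace results.

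First I fix the conventions. For $\mathcal{X}\in\mathbb{T}_{psd}(\mathbf{R})$ the square root $\mathcal{X}^{1/2}$ is the tensor constructed in the previous subsection. I will use that $\operatorname{bcirc}(\mathcal{X}^{1/2})=\operatorname{bcirc}(\mathcal{X})^{1/2}$. This follows from Lemma~\ref{bcircprop}: $\operatorname{bcirc}$ is multiplicative and commutes with ${}^*$. Hence $\operatorname{bcirc}(\mathcal{X}^{1/2})$ is a positive semidefinite square root of $\operatorname{bcirc}(\mathcal{X})$, and by uniqueness of the PSD square root it is \emph{the} square root. I take $\operatorname{tr}$ of a tensor to be $\frac1p\operatorname{tr}\operatorname{bcirc}(\cdot)$, which by circulant structure equals $\operatorname{tr}$ of the first frontal slice. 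Equivalently,
\[
\operatorname{tr}(\mathcal{X})=\frac1p\sum_{k=1}^p\operatorname{tr}\widehat{X}^{(k)}.
\]
Any fixed positive multiple of $\operatorname{tr}\operatorname{bcirc}(\cdot)$ would do equally well. With this convention,
\[
f(\mathcal{X})=\frac1p\operatorname{tr}\bigl(\operatorname{bcirc}(\mathcal{X})^{1/2}\bigr)=\frac1p\sum_{k=1}^p\operatorname{tr}\bigl(\widehat{X}^{(k)}\bigr)^{1/2},
\]
since the unitary DFT block-diagonalises $\operatorname{bcirc}$ and commutes with the functional calculus. Positivity $f>0$ is immediate, because $\mathcal{X}\neq 0$ is positive definite.

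Next, $\operatorname{bcirc}$ is linear (Lemma~\ref{bcircprop}(i)) and injective. So $\operatorname{bcirc}(a\mathcal{X}+b\mathcal{Y})=aX+bY$ with $X=\operatorname{bcirc}(\mathcal{X})$ and $Y=\operatorname{bcirc}(\mathcal{Y})$. These are distinct positive definite $np\times np$ Hermitian matrices whenever $\mathcal{X}\neq\mathcal{Y}$. The claim therefore reduces to the matrix inequality
\[
\operatorname{tr}(aX+bY)^{1/2}>a\operatorname{tr}X^{1/2}+b\operatorname{tr}Y^{1/2}.
\]

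The main step is proving this matrix statement, including the strictness. I would first try the variational formula
\[
\operatorname{tr}X^{1/2}=\min_{Z>0}\tfrac12\operatorname{tr}(XZ^{-1}+Z),
\]
which is attained uniquely at $Z=X^{1/2}$. Write $W=(aX+bY)^{1/2}$ and note the objective is affine in $X$. Then
\[
\operatorname{tr}W=\tfrac12\operatorname{tr}\bigl((aX+bY)W^{-1}+W\bigr)=a\cdot\tfrac12\operatorname{tr}(XW^{-1}+W)+b\cdot\tfrac12\operatorname{tr}(YW^{-1}+W)\ge a\operatorname{tr}X^{1/2}+b\operatorname{tr}Y^{1/2}.
\]
Equality forces $W=X^{1/2}$ and $W=Y^{1/2}$, hence $X=Y$, which contradicts distinctness. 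This gives strict concavity.

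The variational formula itself holds because
\[
\tfrac12\operatorname{tr}(XZ^{-1}+Z)-\operatorname{tr}X^{1/2}=\tfrac12\operatorname{tr}\bigl[(X^{1/2}Z^{-1/2}-Z^{1/2})^*(X^{1/2}Z^{-1/2}-Z^{1/2})\bigr]\ge0,
\]
using the cyclicity $\operatorname{tr}(Z^{-1/2}X^{1/2}Z^{1/2})=\operatorname{tr}X^{1/2}$. Equality holds iff $X^{1/2}=Z$.

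The only delicate point I anticipate is the identification $\operatorname{bcirc}(\mathcal{X}^{1/2})=\operatorname{bcirc}(\mathcal{X})^{1/2}$, which depends on how the paper's T-SVD-based square root is meant. I will resolve this by the uniqueness argument above: both sides are PSD square roots of the same PSD matrix.
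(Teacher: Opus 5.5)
Your argument is correct, but it takes a different route from the paper's.

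The paper stays at the tensor level. It asserts operator concavity of the square root, $(a\mathcal{X}+b\mathcal{Y})^{1/2}\geq a\mathcal{X}^{1/2}+b\mathcal{Y}^{1/2}$ in the L\"owner order, and takes traces. For strictness it uses that a PSD tensor with zero trace is zero, so equality forces the operator identity; squaring then gives $ab(\mathcal{X}^{1/2}-\mathcal{Y}^{1/2})^2=0$.

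You differ in two ways. First, you make the passage to $\operatorname{bcirc}$ matrices explicit. That passage is what justifies applying matrix facts to tensors, and the paper leaves it implicit. Second, you replace L\"owner--Heinz operator concavity by the identity $\operatorname{tr}X^{1/2}=\min_{Z>0}\tfrac12\operatorname{tr}(XZ^{-1}+Z)$. This makes $f$ a pointwise minimum of affine functionals, and strictness follows from uniqueness of the minimiser $Z=X^{1/2}$.

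What your route buys:
\begin{itemize}
\item It is more elementary and fully self-contained, since the identity is proved by a completed square.
\item It avoids the squaring step. There the paper's written expansion uses the cross term $2ab\,\mathcal{X}^{1/2}*\mathcal{Y}^{1/2}$ instead of $ab(\mathcal{X}^{1/2}*\mathcal{Y}^{1/2}+\mathcal{Y}^{1/2}*\mathcal{X}^{1/2})$, though the conclusion survives once this is corrected.
\end{itemize}

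What the paper's route buys:
\begin{itemize}
\item It yields the operator inequality itself, not just its trace.
\item It works verbatim on the closed PSD cone. Your minimisation needs $W=(aX+bY)^{1/2}$ to be invertible, which is harmless here because this subsection defines $\mathbb{T}_{psd}(\mathbf{R})$ as the positive definite tensors.
\end{itemize}

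Your uniqueness argument for $\operatorname{bcirc}(\mathcal{X}^{1/2})=\operatorname{bcirc}(\mathcal{X})^{1/2}$ is also the right way to pin down $\mathcal{X}^{1/2}$. The T-SVD subsection actually produces a factor $\mathcal{M}$ with $\mathcal{M}*\mathcal{M}^*=\mathcal{X}$, not a PSD square root.
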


\noindent\textbf{Proof:}~The function $\mathcal{X} \mapsto \mathcal{X}^{1/2}$ is a concave function.  This means that for any Hermitian tensors $\mathcal{X}$ and $\mathcal{Y}$ and scalars $a, b \geq 0$ with $a + b = 1$, the inequality

\[
(a \mathcal{X} + b \mathcal{Y})^{1/2} \geq a \mathcal{X}^{1/2} + b \mathcal{X}^{1/2}
\]

\noindent holds. Consequently, applying the trace operator yields

\[
\operatorname{tr}(a \mathcal{X} + b \mathcal{Y})^{1/2} \geq a \operatorname{tr} \mathcal{X}^{1/2} + b \operatorname{tr} \mathcal{Y}^{1/2}.
\]

\noindent We now show that equality cannot occur unless $\mathcal{X} = \mathcal{Y}$. Suppose instead that equality holds, i.e.,

\[
\operatorname{tr}\left[(a \mathcal{X} + b \mathcal{Y})^{1/2} - \left(a \mathcal{X}^{1/2} + b \mathcal{Y}^{1/2}\right)\right] = 0.
\]

\noindent Since the tensor inside the trace is positive semidefinite, its trace can only be zero if

\[
(a \mathcal{X} + b \mathcal{Y})^{1/2} = a \mathcal{X}^{1/2} + b \mathcal{Y}^{1/2}.
\]

\noindent Squaring both sides of this equality gives

\[
(a \mathcal{X} + b \mathcal{Y}) = \left( a \mathcal{X}^{1/2} + b \mathcal{Y}^{1/2} \right)^2.
\]

\noindent Expanding both sides, we get

\[
a^2 \mathcal{X} + b^2 \mathcal{Y} + 2a b \mathcal{X}^{1/2}*\mathcal{Y}^{1/2} = a^2 \mathcal{X} + b^2 \mathcal{Y} + 2a b \mathcal{X}^{1/2}* \mathcal{Y}^{1/2}.
\]

\noindent This simplifies to

\[
a b \left( \mathcal{X} + \mathcal{Y} - 2 \mathcal{X}^{1/2}* \mathcal{Y}^{1/2} \right) = 0.
\]

\noindent Since $a b \neq 0$, we conclude that

\[
\left( \mathcal{X}^{1/2} - \mathcal{Y}^{1/2} \right)^2 = 0.
\]

\noindent Thus, $\mathcal{X}^{1/2} = \mathcal{Y}^{1/2}$, which implies $\mathcal{X} = \mathcal{Y}$. Therefore, equality holds in the original inequality only if $\mathcal{X} = \mathcal{Y}$.
 \hfill \qed

   \subsection{Distance between tensors} 

Let $\mathbb{T}_{n}(\mathbf{R})$ be the space of $n \times n \times p$ tensors with T-product, $\mathbb{T}_{Her}(\mathbf{R})$ the subspace of $\mathbb{T}_{n}(\mathbf{R})$ consisting of Hermitian tensor, and $\mathbb{T}_{psd}(\mathbf{R})$ the subset of $\mathbb{T}_{Her}(\mathbf{R})$ consisting of positive semidefinite tensors. The Frobenius inner product on $\mathbb{T}_{n}(\mathbf{R})$ is defined as $\langle \mathcal{A}, \mathcal{A}\rangle=$ $\operatorname{tr} (\mathcal{A}^* *\mathcal{B})$, and the associated norm $\|\mathcal{A}\|_F=\left(\operatorname{tr} (\mathcal{A}^** \mathcal{A})\right)^{1 / 2}$ is called the Frobenius norm. Every psd tensor $\mathcal{A}$ has a unique psd square root, which we denote by $\mathcal{A}^{1 / 2}$. Given $\mathcal{A}, \mathcal{B}$ in $\mathbb{T}_{psd}(\mathbf{R})$ define $d(\mathcal{A}, \mathcal{B})$ by the relation

\begin{equation}\label{metric}
    d(\mathcal{A}, \mathcal{B})=\left[\operatorname{tr} \mathcal{A}+\operatorname{tr} \mathcal{B}-2 \operatorname{tr}\left(\mathcal{A}^{1 / 2}*\mathcal{B}*\mathcal{A}^{1 / 2}\right)^{1 / 2}\right]^{1 / 2}
\end{equation}

\begin{example}

 Let \(\mathcal{A}, \mathcal{B} \in \mathbb{R}^{2 \times 2 \times 2}\) be third-order tensors, defined as follows

\[
\mathcal{A}(:, :, 1) = \begin{bmatrix} 1.1097627 & 1.19273255 \\ 0.13179527 & 0.11751666 \end{bmatrix}, \quad  
\mathcal{A}(:, :, 2) = \begin{bmatrix} 0.13179527 & 0.11751666 \\ 1.10897664 & 1.10577898 \end{bmatrix}
\]  

\[
\mathcal{B}(:, :, 1) = \begin{bmatrix} 2.08473096 & 2.11360891 \\ 0.10834813 & 0.09966327 \end{bmatrix}, \quad  
\mathcal{B}(:, :, 2) = \begin{bmatrix} 0.10834813 & 0.09966327 \\ 2.1783546 & 2.01742586 \end{bmatrix}.
\]  

\end{example}

\noindent By applying \eqref{metric}, we obtain the Bures-Wasserstein distance between Tensors $\mathcal{A}$ and $\mathcal{B}$ is 0.7054867277404278.

The metric $d(\mathcal{A}, \mathcal{B})$ extends naturally from matrices to the tensor space $\mathbb{T}_{psd}(\mathbf{R})$. In its matrix form \cite{bures}, this metric has garnered significant interest across various fields. In quantum information theory \cite{buresquant}, it is known as the Bures distance, while in statistics and optimal transport theory, it is referred to as the Wasserstein metric. 

\section{Computational Considerations}
\label{sec:complexity}

While the theoretical properties of our tensor operations are established in Sections~\ref{results}, it is equally important to understand their computational characteristics. The asymptotic complexity analysis presented here informs the practical applicability of our methods, particularly for large-scale tensor problems common in machine learning and signal processing applications.

\begin{table}[t]
\centering
\caption{Computational complexity of tensor operations}
\label{tab:complexity}
\begin{tabular}{lc}
\hline
Operation & Complexity \\
\hline
T-product & $O(p(mns + mn \log p))$ \\
Eigenvalue bounds & $O(n^3p^3)$ \\
Bures-Wasserstein distance & $O(n^3p)$ \\
Ky Fan sums & $O(n^3p^3)$ \\
\hline
\end{tabular}
\end{table}

The cubic dependence on $n$ in several operations suggests that our methods are particularly well-suited for problems where the tensor order $p$ grows more slowly than the side dimension $n$. 

The complexity analysis yields important insights for real-world implementation of our tensor operations. The T-product's efficient log-linear scaling with tube dimension $p$ makes it particularly well-suited for processing high-order tensors, which frequently arise in applications like video analysis (where $p$ represents temporal frames) and multi-sensor data fusion. This favorable scaling enables practical computation even with thousands of tubes. Meanwhile, the Bures-Wasserstein distance's linear dependence on $p$ and cubic scaling in $n$ permits its use in iterative optimization schemes, as the distance metric can be recomputed efficiently during algorithm convergence.

The computational characteristics also reveal important limitations to consider. The $O(n^3p^3)$ complexity of exact eigenvalue computations becomes prohibitive for tensors with large frontal slice dimensions $(n>100)$, suggesting the need for approximation techniques like randomized SVD or Lanczos methods in such cases. This trade-off between exact computation and approximation accuracy must be carefully balanced based on the specific application requirements and available computational resources. These complexity properties provide clear guidance for selecting appropriate tensor operations based on problem dimensions and accuracy needs.

\section{Conclusion}\label{conclusion}

This article establishes trace-based eigenvalue bounds for positive semi-definite (PSD) tensors under the T-product framework and explores their geometric and algebraic properties. By extending classical distance metrics, such as the Bures-Wasserstein distances, from matrix spaces to tensor spaces, we provide a new perspective on measuring similarities between PSD tensors. \\

\noindent The theoretical results are supported by illustrative examples, demonstrating how the proposed eigenvalue bounds and distance metrics behave when the PSD condition is relaxed. The study concludes with a comprehensive analysis of the computational complexity of the proposed methods, highlighting their efficiency and feasibility.

\section*{Competing Interest}
The authors declare that they have no competing interests.

\section*{Author's contributions}
All authors contributed equally to all.

\section*{Acknowledgment}
Authors would like to thank IIITDM Kancheepuram for the infrastructure and facilities
to carry out this research.

\begin{Backmatter}

\printaddress

\end{Backmatter}

\end{document}